\newcommand{\C}{\mathbb{C}}
\newcommand{\NN}{\mathbb{N}}
\newcommand{\QQ}{\mathbb{Q}}
\newcommand{\Q}{\mathbb{Q}}
\newcommand{\PPP}{\mathbb{P}}
\def\GL{\mathop{\rm  GL}}
\renewcommand{\to}{\longrightarrow}
\newtheorem{Theorem}{Theorem}[section]
\newtheorem{Definition}[Theorem]{Definition}
\newtheorem{Lemma}[Theorem]{Lemma}
\newtheorem{Proposition}[Theorem]{Proposition}
\newtheorem{Corollary}[Theorem]{Corollary}
\newtheorem{Remark}[Theorem]{Remark}
\newtheorem{Example}[Theorem]{Example}
\newtheorem{Conjecture}[Theorem]{Conjecture}
\DeclareMathOperator{\Der}{Der}
\DeclareMathOperator{\pdeg}{pdeg}
\DeclareMathOperator{\codim}{codim}
\def \X(#1){\{x_1,\dots, x_{#1}\}}
\def \GL    {{\rm GL}}
\def \gin   {{\rm gin}}
\newcommand \rgin  {{\rm rgin}}
\newcommand \reg  {{\rm reg}}
\newcommand \M{{\mathcal M}}
\def \LT{{\rm LT}}
\DeclareMathOperator{\rk}{rk}
\newcommand{\A}{\mathcal{A}}
\newcommand \ideal[1] {\langle #1 \rangle}
\begin{document}

\title{New characterizations of freeness for hyperplane arrangements}

\begin{abstract}
In this article we describe two new characterizations of freeness for hyperplane arrangements 
via the study of  the generic initial ideal and of the sectional matrix of the Jacobian ideal of arrangements.
\end{abstract}

\author{Anna Maria Bigatti}
\address{Anna Maria Bigatti, Dipartimento di Matematica, Universit\`a
  degli Studi di Genoa, Via Dodecaneso 35, 16146 Genova , Italy.}
\email{bigatti@dima.unige.it}
\author{Elisa Palezzato}
\address{Elisa Palezzato, Department of Mathematics, Hokkaido University, Kita 10, Nishi 8, Kita-Ku, Sapporo 060-0810, Japan.}
\email{palezzato@math.sci.hokudai.ac.jp}
\author{Michele Torielli}
\address{Michele Torielli, Department of Mathematics, Hokkaido University, Kita 10, Nishi 8, Kita-Ku, Sapporo 060-0810, Japan.}
\email{torielli@math.sci.hokudai.ac.jp}


\date{\today}
\maketitle


\section{Introduction}

Let $V$ be a vector space of dimension $l$ over a field $K$. Fix a system of coordinate $(x_1,\dots, x_l)$ of $V^\ast$. 
We denote by $S = S(V^\ast) = K[x_1,\dots, x_l]$ the symmetric algebra. 
A hyperplane arrangement $\A = \{H_1, \dots, H_n\}$ is a finite collection of hyperplanes in $V$.

Freeness of an arrangement is a key notion which connects arrangement theory with algebraic geometry and combinatorics. 
There are several ways to prove freeness, e.g. using Saito's criterion \cite{saito}, addition-deletion theorem \cite{terao1980arrangementsI}, etc. 
However, is not always easy to characterize freeness. 
In \cite{ziegler1986multiarrangements}, Ziegler proved that the multirestriction $(\A^{H_0}, m^{H_0})$ of a free arrangement $\A$ is also free. 
The converse is not true in general. However in \cite{yoshinaga2004characterization}, Yoshinaga gave a partial converse of Ziegler's 
work and characterized freeness for arrangements by looking at properties around a fixed hyperplane.
In \cite{yoshinaga2005freeness}, Yoshinaga studied arrangements in three-dimensional space and described 
a new characterization of freeness for such arrangements given in terms of the characteristic polynomial and a restricted multiarrangement.
Moreover, with the idea of unifying \cite{yoshinaga2004characterization} and \cite{yoshinaga2005freeness}, 
Schulze in \cite{schulze2012freeness} proved that if the dimension is $l\le4$ (or $l\ge5$ under tameness assumption), 
the freeness of $\A$ is characterized in terms of multirestriction and characteristic polynomials. 
With similar goals, Abe and Yoshinaga in \cite{abe2013free} characterized freeness in terms of the multirestriction and 
the second coefficient of characteristic polynomials (without posing any conditions on dimension or tameness).

The purpose of this paper is to give new characterizations of freeness for any dimension. 
Namely, we characterize freeness in terms of the generic initial ideal and of the sectional matrix of the Jacobian ideal $J(\A)$ of the arrangement $\A$, 
making use of the characterization of Terao \cite{orlterao} of freeness in term of Cohen-Macaulayness
of the Jacobian ideal of the arrangement. 

This paper is organized as follows. 
In \S2, we recall the basic facts about hyperplane arrangements and their freeness. 
In \S3, we describe the connection between the study of free hyperplane arrangements and commutative algebra.
In \S4, we recall the notion of generic initial ideal of a given homogeneous ideal. In \S5, we describe our first characterization via generic initial ideal.  
In \S6, we recall the notion of sectional matrix of a given homogeneous ideal and we prove some new results regarding the sectional matrix. 
In \S7, we describe our second characterization via sectional matrices. In \S8, we describe some additional properties of the generic initial ideal
of the Jacobian ideal of a free hyperplane arrangement.
In \S9,  we reverse our point of view and we describe which strongly stable ideals are rgin of arrangement's Jacobian ideals. 

\section{Preliminares on hyperplane arrangements}\label{sec:arr}

In this section, we recall the terminology and basic notation of hyperplane arrangements and some fundamental results.

Let $K$ be a field of characteristic zero. A finite set of affine hyperplanes $\A =\{H_1, \dots, H_n\}$ in $K^l$ is called a \textbf{hyperplane arrangement}. 
For each hyperplane $H_i$ we fix a defining equation $\alpha_i\in S= K[x_1,\dots, x_l]$ such that $H_i = \alpha_i^{-1}(0)$, 
and let $Q(\A)=\prod_{i=1}^n\alpha_i$. An arrangement $\A$ is called \textbf{central} if each $H_i$ contains the origin of $K^l$. 
In this case, the defining equation $\alpha_i\in S$ is linear homogeneous, and hence $Q(\A)$ is homogeneous of degree $n$. 

Let $L(\A)=\{\bigcap_{H\in\mathcal{B}}H \mid \mathcal{B}\subseteq\A\}$ be the \textbf{lattice of intersection} of $\A$. 
Define a partial order on $L(\A)$ by $X\le Y$ if and only if $Y\subseteq X$, for all $X,Y\in L(\A)$. 
Note that this is the reverse inclusion. Define a rank function on $L(\A)$ by $\rk(X)=\codim(X)$. 
$L(\A)$ plays a fundamental role in the study of hyperplane arrangements, in fact it determines the combinatorics of the arrangement.
Let $L^p(\A)=\{X\in L(\A)~|~\rk(X)=p\}$. We call $\A$ \textbf{essential} if $L^l(\A)\ne\emptyset$.
We denote by $\Der_{K^l} =\{\sum_{i=1}^l f_i\partial_{x_i}~|~f_i\in S\}$ the $S$-module of \textbf{polynomial vector fields} on $K^l$ (or $S$-derivations). 
Let $\delta =  \sum_{i=1}^l f_i\partial_{x_i}\in \Der_{K^l}$. Then $\delta$ is  said to be \textbf{homogeneous of polynomial degree} $d$ if $f_1, \dots, f_l$ are homogeneous polynomials of degree~$d$ in $S$. 
In this case, we write $\pdeg(\delta) = d$.

\begin{Definition} 
Let $\A$ be a central arrangement. Define the \textbf{module of vector fields logarithmic tangent} to $\A$ (logarithmic vector fields) by
$$D(\A) = \{\delta\in \Der_{K^l}~|~ \delta(\alpha_i) \in \ideal{\alpha_i} S, \forall i\}.$$
\end{Definition}

The module $D(\A)$ is obviously a graded $S$-module and we have that $D(\A)= \{\delta\in \Der_{K^l}~|~ \delta(Q(\A)) \in \ideal{ Q(\A)} S\}$. 
In particular, since the arrangement $\A$ is central, then the Euler vector field $\delta_E=\sum_{i=1}^lx_i\partial_{x_i}$ belongs to $D(\A)$. 
In this case, we can write $D(\A)\cong S{\cdot}\delta_E\oplus D_0(\A)$, where $D_0(\A)=\{\delta\in \Der_{K^l}~|~ \delta(Q(\A))=0\}$.

The following is the more used definition of a free hyperplane arrangement. However in the rest of the paper we will use as a definition
the equivalence described in Theorem~\ref{theo:freCMcod2}.
\begin{Definition} 
A central arrangement $\A$ is said to be \textbf{free with exponents $(e_1,\dots,e_l)$} 
if and only if $D(\A)$ is a free $S$-module and there exists a basis $\delta_1,\dots,\delta_l \in D(\A)$ 
such that $\pdeg(\delta_i) = e_i$, or equivalently $D(\A)\cong\bigoplus_{i=1}^lS(-e_i)$.
\end{Definition}


\begin{Remark}\label{rem:assumptexpfree}
Let $\A$ be free with exponents $(e_1,\dots,e_l)$. We can suppose that $e_1\le e_2\le\cdots\le e_l$. 
Moreover, if $\A$ is essential then $e_1=1$. 
\end{Remark}

%

\section{Hyperplane arrangements and commutative algebra}

The purpose of this paper is to study free hyperplane arrangements in the language of commutative algebra.
For this reason, we start our investigation from the characterization of freeness described by Terao that connects exactly 
the theory of hyperplane arrangements with commutative algebra, see \cite{orlterao}.

\begin{Definition} Given an arrangement $\A=\{H_1,\dots, H_n\}$ in $K^l$, the \textbf{Jacobian ideal} of $\A$
is the ideal of $S$ generated by $Q(\A)$ and all its partial derivatives, and it is denoted by $J(\A)$.
\end{Definition}
Notice that, since $J(\A)$ is the ideal describing the singular locus of $\A$, we have that $S/J(\A)$ is $0$ or $(l-2)$-dimensional. 
\begin{Remark} Let $\A$ be a central arrangement. Then $Q(\A)$ is homogenous and hence we can write $$nQ(\A)=\sum_{i=1}^lx_i\frac{\partial Q(\A)}{\partial x_i}$$
This implies that if $\A$ is central, then $J(\A)$ is a homogeneous ideal generated by at most $l$ polynomials all of degree $n{-}1$.
\end{Remark}


\begin{Theorem}[Terao's criterion]\label{theo:freCMcod2} 
A central arrangement $\A$ is free if and only if $S/J(\A)$ is $0$ or Cohen-Macaulay.
\end{Theorem}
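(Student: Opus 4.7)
The plan is to convert freeness of $\A$ into a statement about syzygies of the partial derivatives of $Q(\A)$, then apply the Auslander--Buchsbaum formula. First, since $\A$ is central the Euler relation $nQ(\A)=\sum_{i=1}^l x_i\,\partial_{x_i}Q(\A)$ makes $Q(\A)$ redundant among the generators of $J(\A)$, so $J(\A)$ is generated by $\partial_{x_1}Q(\A),\dots,\partial_{x_l}Q(\A)$, each homogeneous of degree $n-1$. Using the identification $\sum f_i\partial_{x_i}\leftrightarrow(f_1,\dots,f_l)$ between $\Der_{K^l}$ and $S^l$, the $S$-linear map
$$\phi\colon S^l\longrightarrow S,\qquad (f_1,\dots,f_l)\longmapsto\sum_{i=1}^l f_i\,\partial_{x_i}Q(\A)$$
has image $J(\A)$ and kernel exactly $D_0(\A)$. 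Up to an appropriate internal degree shift, this yields a short exact sequence
$$0\longrightarrow D_0(\A)\longrightarrow S^l\longrightarrow J(\A)\longrightarrow 0 \qquad (\ast)$$
of graded $S$-modules.

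Second, I would reduce freeness of $\A$ to freeness of $D_0(\A)$. The decomposition $D(\A)=S\cdot\delta_E\oplus D_0(\A)$ recalled in \S2 shows that $D(\A)$ is free over $S$ if and only if $D_0(\A)$ is free. Moreover, the degenerate case $S/J(\A)=0$ only happens in trivial situations in which $\A$ is automatically free, so I can assume $\dim S/J(\A)=l-2$ (the value recorded in \S3). Hence the task becomes: $D_0(\A)$ is free if and only if $S/J(\A)$ is Cohen--Macaulay of dimension $l-2$.

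Third, I would carry out the main depth computation, combining $(\ast)$ with the short exact sequence $0\to J(\A)\to S\to S/J(\A)\to 0$. For the \emph{if} direction, assume $S/J(\A)$ is Cohen--Macaulay; then $\depth S/J(\A)=l-2$, so the second sequence gives $\depth J(\A)\ge l-1$, and the depth lemma applied to $(\ast)$ yields
$$\depth D_0(\A)\ge\min\bigl\{\depth S^l,\ \depth J(\A)+1\bigr\}=l.$$
Since $S$ is regular of dimension $l$, the Auslander--Buchsbaum formula forces $\mathrm{pd}(D_0(\A))=0$, i.e.\ $D_0(\A)$ is free, so $\A$ is free. Conversely, if $\A$ is free then $D_0(\A)$ is free, hence $(\ast)$ is a length-one free resolution of $J(\A)$, giving $\mathrm{pd}(J(\A))\le 1$ and thus $\mathrm{pd}(S/J(\A))\le 2$; Auslander--Buchsbaum then yields $\depth S/J(\A)\ge l-2$, which matches $\dim S/J(\A)=l-2$ and proves the Cohen--Macaulay property.

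The main obstacle I anticipate is the careful identification of $D_0(\A)$ with the first syzygy module of the Jacobian generators (so that $(\ast)$ really is exact), together with the bookkeeping of degree shifts; once $(\ast)$ is in place, the rest is a textbook application of the depth lemma and Auslander--Buchsbaum over the polynomial ring $S$.
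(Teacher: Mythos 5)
Your argument is correct. Note that the paper does not prove this statement at all: Theorem~\ref{theo:freCMcod2} is quoted as Terao's criterion with a citation to \cite{orlterao}, and the short exact sequence you construct is exactly the one the authors record (for the free case) in Remark~\ref{remark:resjaccmfree}. What you have written is essentially the standard proof from the literature: the Euler relation eliminates $Q(\A)$ from the generators, the kernel of $(f_1,\dots,f_l)\mapsto\sum f_i\,\partial_{x_i}Q(\A)$ is precisely $D_0(\A)$ by definition of $D_0$, the splitting $D(\A)\cong S\cdot\delta_E\oplus D_0(\A)$ reduces freeness of $D(\A)$ to freeness of $D_0(\A)$, and the two applications of the depth lemma plus Auslander--Buchsbaum close the loop in both directions. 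The only places where you are slightly terse are harmless: the degenerate case $S/J(\A)=0$ occurs exactly when $n=1$ (for $n\ge 2$ the partials are homogeneous of positive degree, hence contained in the irrelevant maximal ideal), and a single hyperplane is indeed free; and the passage from $\mathrm{pd}(D_0(\A))=0$ to freeness uses that a finitely generated graded projective module over $S$ is graded free, which supplies the homogeneous basis required by the definition of freeness. The degree bookkeeping you defer does not affect the depth computations, so nothing is missing.
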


If an arrangement $\A$ is free, then we can easily compute the minimal resolution of the Jacobian ideal. See \cite{orlterao} for more details.
\begin{Remark}\label{remark:resjaccmfree} Let $\A$ be a central, essential and free hyperplane arrangement 
with exponents $(e_1,\dots, e_l)$. Then, by Hilbert-Burch Theorem, $J(\A)$ has a minimal free resolution of the type
\begin{equation*}\label{eq:resfreejac}
0\to\bigoplus_{i=2}^lS(-n-e_i+1)\cong D_0(\A)\to S(-n+1)^l\to J(\A)\to0.
\end{equation*}
\end{Remark}

%
%
%

\section{Generic initial ideal}\label{sec:GIN}

In this section we recall the definition and some known properties of the generic initial ideal.
We also present a new result which is the starting point of
our first characterization in Section~\ref{sec:freearrgin}.




\begin{Definition} 
A monomial ideal $B$ in $K[x_1,\dots, x_l]$ is said to be  \textbf{strongly stable} if for every power-product $t \in B$ and
every $i,j$ such that $i<j$ and $x_j|t$, the power-product $x_i\cdot t/x_j$ is in $B$.
\end{Definition}

Directly from the definition of strongly stable ideal, 
we have the following lemma.

\begin{Lemma}\label{lem:gingensequiv} 
Let $B$ be a strongly stable ideal in $K[x_1, \ldots , x_l]$ and $k\in\{1,\dots, l\}$. Then 
$B$ has no minimal generators divisible by $x_k$ if and only if $B$ has no minimal generators divisible by $x_k,\dots, x_l$.
\end{Lemma}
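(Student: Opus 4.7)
The plan is to prove the two implications separately. One direction is immediate: if $B$ has no minimal generators divisible by any of $x_k,\dots,x_l$, then in particular none is divisible by $x_k$. The content is in the other direction.

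For the nontrivial direction, I would argue by contradiction. Assume $B$ has no minimal generator divisible by $x_k$, but suppose (for contradiction) there exists a minimal generator $t$ of $B$ divisible by $x_j$ for some $j \in \{k+1,\dots,l\}$. Since $B$ is strongly stable, the power-product $t' = (x_k/x_j)\cdot t$ lies in $B$. Because $x_k \mid t'$ and $t' \in B$, there is a minimal generator $s$ of $B$ dividing $t'$. The goal is to show that $x_k \mid s$, giving the desired contradiction with the hypothesis on $B$.

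To establish $x_k \mid s$, I would proceed by contradiction again: assume $x_k \nmid s$. Writing $t = x_1^{a_1}\cdots x_l^{a_l}$ with $a_j \geq 1$, one has $t' = x_1^{a_1}\cdots x_k^{a_k+1}\cdots x_j^{a_j-1}\cdots x_l^{a_l}$. Writing $s = x_1^{b_1}\cdots x_l^{b_l}$ with $b_k = 0$, the relation $s \mid t'$ translates into $b_i \leq a_i$ for $i \notin \{k,j\}$, $b_k = 0 \leq a_k$, and $b_j \leq a_j - 1 < a_j$. Hence $s \mid t$; since $t$ is a minimal generator of $B$ and $s$ is also a minimal generator of $B$, minimality forces $s = t$, which contradicts $b_j < a_j$. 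Thus $x_k \mid s$, completing the argument.

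I do not anticipate a substantial obstacle: the only subtlety is the exponent bookkeeping in the last step, where one must carefully track how multiplication by $x_k$ and division by $x_j$ affect each coordinate. Once that is written out, the contradiction is immediate and the lemma follows.
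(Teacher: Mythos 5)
Your proof is correct: the key step (if $x_k\nmid s$ then $s\mid t$, forcing $s=t$ and contradicting $b_j<a_j$) is sound, and it establishes exactly the nontrivial implication. The paper gives no proof, stating only that the lemma follows directly from the definition of strongly stable ideals, and your argument is precisely the direct verification being alluded to.
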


\begin{Definition} 
Let $\sigma$ be a term ordering on $S=K[x_1,\dots,x_l]$ and $f$ a non-zero polynomial in $S$. 
Then $\LT_\sigma(f)= \max_\sigma\{{\rm Supp}(f)\}$.  If
$I$ is an ideal in $S$, then the 
\textbf{leading term ideal} (or \textit{initial ideal})
of $I$ is the ideal $\LT_\sigma(I)$ generated by
 $\{\LT_\sigma(f) \mid f \in I{\setminus}\{0\}\;\}$.
\end{Definition}

The following theorem is due to Galligo \cite{galligo1974propos}.

\begin{Theorem}[Galligo]\label{thm:Galligo}
Let $I$ be a homogeneous ideal in $K[x_1,\dots,x_l]$, with $K$ a
field of characteristic $0$ and $\sigma$ a term ordering such that
$x_1>_\sigma x_2 >_\sigma \dots >_\sigma x_l$. Then there exists a Zariski
open set $U\subseteq\GL(l)$ and a strongly stable ideal $B$ such that for each  $g\in U$, $\LT_\sigma(g(I)) = B$.
\end{Theorem}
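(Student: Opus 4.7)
The plan is to establish the theorem in two stages: first produce a dense Zariski open $U \subseteq \GL(l)$ on which $\LT_\sigma(g(I))$ is a constant monomial ideal $B$, then show that this $B$ is strongly stable by combining a Borel-fixedness argument with the characteristic-zero hypothesis.

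For the existence of $U$, I would work degree by degree. Fix a vector-space basis of $I_d$ for each relevant degree $d$ and form, for a formal matrix $g = (g_{ij})$ with entries viewed as coordinates on $\GL(l)$, the coefficient matrix of $\{g \cdot f\}_{f \in \text{basis}}$ against the monomials of $S_d$, columns ordered by $\sigma$. The pivot columns of the reduced row echelon form index exactly the monomials of $\LT_\sigma(g(I))_d$, and they depend only on the non-vanishing of certain minors, which are polynomials in the entries $g_{ij}$. On the Zariski open subset $U_d \subseteq \GL(l)$ where the $\sigma$-lexicographically earliest set of non-vanishing pivot minors is realized, the set $\LT_\sigma(g(I))_d$ is a fixed collection $B_d$ of monomials. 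Because a reduced Gröbner basis of $g_0(I)$ for any fixed $g_0$ sits in finitely many degrees, the full intersection $U := \bigcap_d U_d$ reduces to a finite intersection, hence is still Zariski open and dense; setting $B := \bigoplus_d \langle B_d \rangle$ gives a monomial ideal with $\LT_\sigma(g(I)) = B$ for all $g \in U$.

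For strong stability of $B$, I would show first that $B$ is Borel-fixed, i.e.\ that the one-parameter subgroups $\phi_t : x_j \mapsto x_j + t x_i$ (for $i < j$, fixing the other variables) satisfy $\phi_t(B) = B$. Fix $g \in U$. The set $\{t \in K : \phi_t g \in U\}$ is cofinite, so $\LT_\sigma\bigl(\phi_t(g(I))\bigr) = B$ for generic $t$. Given $m \in B$ and $f \in g(I)$ with $\LT_\sigma(f) = m$, write $\phi_t f = \sum_k t^k f_k \in \phi_t(g(I))$ and extract the $\sigma$-leading term for generic $t$: each component $f_k$ has its leading monomial in $B$, and running this across the one-parameter subgroups and all monomials of $\phi_t m$ forces every monomial of $\phi_t m$ into $B$, i.e.\ $\phi_t(B) \subseteq B$. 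The reverse inclusion follows from $\phi_t^{-1} = \phi_{-t}$ and a dimension count in each degree. Finally, differentiating the inclusion $\phi_t(B) \subseteq B$ at $t = 0$ yields the derivation $x_i \partial_{x_j}$ mapping $B$ into $B$; since in characteristic zero $x_i \partial_{x_j}(x^\alpha) = a_j \cdot (x_i/x_j) x^\alpha$ with $a_j \neq 0$ whenever $x_j \mid x^\alpha$, this gives exactly the strong-stability condition of the paper's definition.

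The main obstacle is the Borel-fixedness step: it is easy to see that elements of the Borel preserving leading terms of monomials act trivially on $B$, but this gives no information, and the useful one-parameter subgroups $\phi_t$ do \emph{not} preserve leading terms. The heart of the argument is the polynomial-in-$t$ analysis that transfers control of $\LT_\sigma(\phi_t f)$ back to monomials of $\phi_t m$ (rather than merely to $m$ itself), and both the final passage from Borel-fixed to strongly stable and the need to divide by integers such as $a_j$ are where the characteristic-zero hypothesis is essential.
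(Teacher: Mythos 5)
The paper does not prove this theorem; it is quoted verbatim from Galligo's paper \cite{galligo1974propos}, so there is no in-paper proof to compare against and your proposal has to stand on its own. Your overall strategy is the standard one (generic constancy of the leading-term sets degree by degree, then Borel-fixedness under the unipotent one-parameter subgroups, then characteristic zero to pass from Borel-fixed to strongly stable), and the first stage is essentially right: the only imprecision is that ``a reduced Gr\"obner basis sits in finitely many degrees'' by itself does not show $\bigcap_d U_d$ is a finite intersection -- you also need that the generic sets $B_d$ assemble into an ideal (for $g\in U_d\cap U_{d+1}$ the ideal property of $\LT_\sigma(g(I))$ gives $S_1\cdot B_d\subseteq B_{d+1}$), that this ideal is generated in degrees $\le D$ by Noetherianity, and then the Hilbert-function count $\dim_K\LT_\sigma(g(I))_d=\dim_K I_d=\dim_K B_d$ to conclude $U_{\le D}\subseteq U_d$ for $d>D$. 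That is routine to supply.

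The genuine gap is in the Borel-fixedness step. From $\LT_\sigma(\phi_t(g(I)))=B$ for cofinitely many $t$ and $\phi_t f=\sum_k t^k f_k$, what you actually get by ``extracting the leading term for generic $t$'' is that the single monomial $\max_\sigma\bigcup_k \mathrm{Supp}(f_k)$ lies in $B$; the assertion that \emph{each} component $f_k$ has its leading monomial in $B$ does not follow ($f_k$ does not lie in $\phi_s(g(I))$ for any $s$, and the overall maximum may be contributed by a non-leading monomial of $f$ pushed up by a high power of $x_i/x_j$), and the sentence meant to finish the argument (``running this across \dots forces every monomial of $\phi_t m$ into $B$'') is a restatement of the goal rather than a deduction. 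The correct argument must work with the whole subspace $I_d$ rather than one polynomial at a time: e.g.\ pass to $\wedge^{s}(\phi_t g(I_d))$ with $s=\dim_K I_d$, note that the coefficient of every wedge-monomial $\sigma$-larger than $m_1\wedge\dots\wedge m_s$ (the decreasing list of $B_d$) is a polynomial in $t$ vanishing on a cofinite set, hence identically, and then extract the coefficient of $t^1$; with the basis of $I_d$ in reduced echelon form this linear coefficient isolates, for $A=(B_d\setminus\{m_u\})\cup\{x_im_u/x_j\}$, a term proportional to $\deg_{x_j}(m_u)$, whose nonvanishing in characteristic $0$ forces $x_im_u/x_j\in B_d$. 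That determinantal (or equivalent inductive) bookkeeping is the actual content of Galligo's theorem and is exactly what is missing from your writeup, even though you correctly flag it as ``the heart of the argument.'' The final passage from Borel-fixedness to strong stability via $x_i\partial_{x_j}$ in characteristic zero is fine.
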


\begin{Definition} 
The strongly stable ideal $B$ given in Theorem~\ref{thm:Galligo} is called the \textbf{\boldmath generic initial ideal with respect to $\sigma$} 
of $I$ and it is denoted by {\boldmath$\gin_\sigma(I)$}.  
In particular, when $\sigma=$DegRevLex, $\gin_\sigma(I)$ is simply denoted with {\boldmath$\rgin(I)$}.
\end{Definition}

Since we are interested in studying free hyperplane arrangements, we need the following result on Cohen-Macaulay ideals by Bayer and Stillman \cite{bayer1987criterion}.
\begin{Theorem}\label{theo:ginCM} 
Let $I$ be a homogeneous ideal in $S=K[x_1, \dots, x_l]$. 
Then $I$ is Cohen-Macaulay if and only if $\rgin(I)$ is Cohen-Macaulay.
Moreover, a regular sequence
  for $S/\rgin(I)$ is  $x_l,x_{l-1}, \dots, x_{l-\dim(S/I)+1}$.
\end{Theorem}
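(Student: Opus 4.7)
The plan is to separate the statement into two pieces: a dimension comparison and a depth comparison. The equality $\dim(S/I) = \dim(S/\rgin(I))$ is automatic, as taking any initial ideal preserves the Hilbert function and dimension is read off from the Hilbert polynomial. Setting $d := \dim(S/I)$, the Cohen--Macaulay equivalence and the explicit regular sequence both reduce to depth statements about $S/\rgin(I)$.

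The main technical input, due to Bayer and Stillman, is the depth-preservation identity $\depth(S/I) = \depth(S/\rgin(I))$ for the DegRevLex order. The feature of DegRevLex that makes this possible is the compatibility $\rgin(I + (x_l)) = \rgin(I) + (x_l)$, valid after the generic change of coordinates from Theorem~\ref{thm:Galligo} that lets $x_l$ play the role of a generic linear form. Iterating this identity on $x_l, x_{l-1}, \dots$ matches up a maximal regular sequence of generic linear forms on $S/I$ with an initial segment of $x_l, x_{l-1}, \dots$ on $S/\rgin(I)$; this simultaneously yields the depth equality and, via the CM hypothesis $\depth = \dim = d$, identifies the explicit regular sequence $x_l, \dots, x_{l-d+1}$.

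For a more self-contained treatment of the regular sequence, one can exploit that $B := \rgin(I)$ is strongly stable (Galligo). The basic combinatorial step is: for strongly stable $B$, the variable $x_l$ is a nonzerodivisor on $S/B$ if and only if no minimal generator of $B$ is divisible by $x_l$. The nontrivial direction uses that any factorization $x_l \cdot t = m \cdot g$ with $g$ a minimal generator not divisible by $x_l$ forces $x_l \mid m$ and hence $t \in B$. After passing to $S/(x_l)$, the image of $B$ is again strongly stable in one fewer variable, and the argument iterates. Combined with Lemma~\ref{lem:gingensequiv} and the fact that the depth of $S/B$ for $B$ strongly stable equals $l$ minus the largest index appearing in a minimal generator, the Cohen--Macaulay condition $\depth = d$ pins down this largest index to $l-d$, so the listed variables form a regular sequence of the correct length.

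The main obstacle is the DegRevLex-specific identity underlying Bayer--Stillman; it genuinely uses the reverse-lex tiebreaking and would fail for, say, pure Lex. Once it is in hand, the dimension comparison is free and the explicit regular sequence is a matter of strongly stable bookkeeping, with Lemma~\ref{lem:gingensequiv} ensuring that the relevant top variables drop out of the minimal generators simultaneously rather than one at a time.
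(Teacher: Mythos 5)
The paper gives no proof of Theorem~\ref{theo:ginCM}: it is imported verbatim from Bayer--Stillman \cite{bayer1987criterion}, so there is no in-paper argument to compare against. Your proposal is a correct reconstruction of the standard proof of that cited result. The decomposition into dimension (free, from Hilbert-function invariance of initial ideals) plus depth (the genuinely DegRevLex-specific part, via $\inn_\sigma(I+(x_l))=\inn_\sigma(I)+(x_l)$ in generic coordinates) is exactly how the literature proceeds, and your combinatorial handling of the regular sequence --- $x_l$ is a nonzerodivisor on $S/B$ for $B$ strongly stable iff no minimal generator is divisible by $x_l$, then iterate, using Lemma~\ref{lem:gingensequiv} to see that the top variables drop out simultaneously --- is sound. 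The one place where you lean on an unproved ingredient is the claim that $\depth(S/B)$ for $B$ strongly stable equals $l$ minus the largest index of a variable occurring in a minimal generator; this needs the Eliahou--Kervaire resolution (or an equivalent computation) and should be cited rather than asserted, but it is standard and correct. In short: no gap, just a proof the paper chose to outsource.
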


We now mention some results about the degree of the generators in
$\rgin(I)$, concluding with a new corollary.
 In particular, our goal is to characterize the $\rgin$ associated to
 a free hyperplane arrangement in terms of its generators
(Theorem~\ref{theo:firstequivfregin}).

\begin{Remark}\label{rem:genrgin}
Let $I$ be a homogeneous ideal in $S$.
If $I$ has a minimal generator of degree~$d$, then also $g(I)$ does and then
$\rgin(I)$ has a minimal generator of degree~$d$. 
The converse is not true in general: consider for example the ideal  $I=\ideal{z^5,xyz^3}$ in $\Q[x,y,z]$, whose $\rgin(I)$ is $\ideal{x^5,x^4y,x^3y^3}$.
\end{Remark}

\begin{Definition} 
Let $I$ be a homogeneous ideal in the ring $K[x_1,\dots,x_l]$. 
The \textbf{Castelnuovo-Mumford regularity} of $I$, denoted~{\boldmath$\reg(I)$},
 is the maximum of the numbers $\beta_{i,j}(I)-i$, 
where $\beta_{i,j}(I)$ are the graded Betti numbers of $I$.
\end{Definition}


\begin{Theorem}[\cite{bayer1987criterion}]\label{theo:regginequalideal} 
Let $I$ be a homogeneous ideal in $K[x_1,\dots,x_l]$. Then $\reg(I)=\reg(\rgin(I))$. 
Moreover, if $B$ is a strongly stable ideal, then $\reg(B)$ is the highest degree of a minimal generator of $B$.
\end{Theorem}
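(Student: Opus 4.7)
My plan is to handle the two assertions separately, treating the equality $\reg(I)=\reg(\rgin(I))$ first and then the combinatorial statement on strongly stable ideals.

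For the first part, I would begin by replacing $I$ with $g(I)$ for $g\in\GL(l)$ in the Zariski open set $U$ of Theorem~\ref{thm:Galligo}; since $g$ is a graded automorphism of $S$, the Betti numbers (and hence the regularity) are unchanged, so it suffices to prove $\reg(I)=\reg(\LT_\sigma(I))$ with $\sigma=$ DegRevLex and $I$ in generic coordinates. The inequality $\reg(I)\le\reg(\LT_\sigma(I))$ holds for any term order $\sigma$ by the standard Gröbner degeneration/upper-semicontinuity argument: flat degeneration from $I$ to $\LT_\sigma(I)$ can only make Betti numbers grow. For the reverse inequality the crucial point — and the main obstacle — is the interaction between DegRevLex and generic coordinates. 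I would invoke the Bayer–Stillman criterion for $m$-regularity, which characterizes $\reg(I)\le m$ as the existence, in sufficiently high degree, of a sequence of linear forms $\ell_1,\ldots,\ell_k$ such that $(I:\ell_i)_{\ge m}=I_{\ge m}$ modulo the previous ones. In generic coordinates the forms $x_l,x_{l-1},\ldots$ play this role for $I$, and by the defining property of DegRevLex a colon by $x_l$ commutes with taking $\LT_\sigma$. Combining these two facts one transfers the regularity bound from $\rgin(I)$ back to $I$, giving $\reg(\rgin(I))\le\reg(I)$.

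For the second part, let $B$ be strongly stable with minimal monomial generating set $G(B)$, and for $u\in G(B)$ write $m(u)$ for the largest index of a variable dividing $u$. I would quote the Eliahou–Kervaire resolution, which provides an explicit minimal free resolution of $B$ with
\[
\beta_{i,i+j}(B)=\sum_{u\in G(B),\ \deg(u)=j}\binom{m(u)-1}{i}.
\]
From this formula one reads off that $\beta_{i,i+j}(B)\ne 0$ forces $j$ to be the degree of some minimal generator of $B$; consequently
\[
\reg(B)=\max\{\,j-i\ :\ \beta_{i,j}(B)\ne 0\,\}=\max\{\deg(u)\ :\ u\in G(B)\}.
\]
The inequality $\reg(B)\ge\max_u\deg(u)$ is immediate because the zeroth Betti numbers contribute $\beta_{0,\deg(u)}(B)\ne 0$ for each minimal generator $u$.

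The expected main difficulty is the first half: carefully justifying that, in generic coordinates and with respect to DegRevLex, taking the colon ideal by the last variable commutes with passing to the initial ideal, and iterating this to prove the regularity equality via the Bayer–Stillman criterion. The second half is essentially a citation of the Eliahou–Kervaire resolution, whose construction via admissible symbols is purely combinatorial and yields the regularity formula by inspection of the bidegrees in which Betti numbers are nonzero.
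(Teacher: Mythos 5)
The paper does not prove this statement: it is quoted directly from Bayer--Stillman, and your argument is a correct reconstruction of exactly the cited proofs --- the Bayer--Stillman regularity criterion combined with the compatibility of DegRevLex with colon by the last variable in generic coordinates for the equality $\reg(I)=\reg(\rgin(I))$, and the Eliahou--Kervaire Betti number formula (which the paper itself later records as Proposition~\ref{prop:elihkercrit}) for the statement about strongly stable ideals. Both halves are sound and match the standard route, so there is nothing to correct.
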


\begin{Lemma}[\cite{BPT2016} Lemma 4.4]\label{lemma:no_gens_i}
Let $I$ be a homogeneous ideal in the ring $S=K[x_1,\dots,x_l]$ 
generated  in
 degree $\le D$. 
If there exists $i\le l$  such that
$\rgin(I)$ has no minimal generators of degree $D$ in
$S_{(i)}=K[x_1,\dots,x_i]$,
then $\rgin(I)$ has no minimal generators of any degree $\ge D$ in $S_{(i)}$.
\end{Lemma}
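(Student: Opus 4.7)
The plan is to reduce the statement to Green's crystallization principle via a generic linear section. Setting $B := \rgin(I)$, I would first invoke a classical property of the DegRevLex generic initial ideal: if $L_{i+1}, \dots, L_l$ are sufficiently generic linear forms in $S$ and $\bar I \subset S_{(i)}$ denotes the image of $I$ under the isomorphism $S/(L_{i+1}, \dots, L_l) \cong S_{(i)}$, then
\[
\rgin_{S_{(i)}}(\bar I) \;=\; \rgin(I) \cap S_{(i)},
\]
where on the left the generic initial ideal is computed in the smaller polynomial ring. This identification follows from Galligo's theorem (Theorem~\ref{thm:Galligo}) combined with the fact that, for the DegRevLex order, passing to a generic hyperplane section commutes with taking the initial ideal. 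Under this identification, the minimal generators of $\rgin(I)$ lying in $S_{(i)}$ correspond degree-by-degree to the minimal generators of $\rgin_{S_{(i)}}(\bar I)$.

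Second, $\bar I$ inherits from $I$ the generation bound, so $\bar I$ is generated in degree $\le D$. The hypothesis of the lemma translates, via the identification above, to the statement that $\rgin_{S_{(i)}}(\bar I)$ has no minimal generator of degree $D$. Applying the contrapositive of Remark~\ref{rem:genrgin} inside $S_{(i)}$, I conclude that $\bar I$ itself has no minimal generator of degree $D$, and so $\bar I$ is generated in degree $\le D - 1$.

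Third, I would invoke Green's crystallization principle in $S_{(i)}$: if a homogeneous ideal $J$ is generated in degree $\le d$ and $\rgin(J)$ has no minimal generator of degree $d+1$, then $\rgin(J)$ is itself generated in degree $\le d$. Applied to $J = \bar I$ with $d = D - 1$, this yields that $\rgin_{S_{(i)}}(\bar I) = \rgin(I) \cap S_{(i)}$ is generated in degree $\le D - 1$, so it has no minimal generators of any degree $\ge D$, which is precisely the desired conclusion. The main obstacle is the rigorous establishment of the restriction identification $\rgin(I) \cap S_{(i)} = \rgin_{S_{(i)}}(\bar I)$: it demands a careful simultaneous choice of the generic $g \in \GL(l)$ (used to compute $\rgin(I)$) and of the linear forms $L_{i+1}, \dots, L_l$ (used to define $\bar I$), so that both genericity conditions are fulfilled. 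A more elementary route arguing directly from the strong stability of $\rgin(I)$ and the Eliahou--Kervaire decomposition would essentially recapitulate the proof of crystallization in disguise and would not be any shorter.
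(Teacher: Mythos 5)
Your argument is correct, but note that this paper does not actually prove the lemma: it is imported verbatim from \cite{BPT2016} (Lemma 4.4), and the in-house route there is the sectional-matrix one, in the spirit of Proposition~\ref{prop: BReq_rgingen} and Theorem~\ref{thm:BR_Persistence_Bor} --- namely, ``no minimal generators of degree $D$ in $S_{(i)}$'' is read as maximal growth $\M_{S/I}(i,D)=\sum_{j\le i}\M_{S/I}(j,D{-}1)$, and a Gotzmann-type persistence statement propagates that equality (hence the absence of generators) to all higher degrees. You instead reduce to two classical facts: the compatibility of $\rgin$ with generic hyperplane sections for DegRevLex (so that $\rgin(I)\cap S_{(i)}$ is identified with $\rgin_{S_{(i)}}(\bar I)$), and Green's crystallization principle applied in $S_{(i)}$ at level $D-1$ after using the contrapositive of Remark~\ref{rem:genrgin} to drop the generation degree of $\bar I$ from $D$ to $D-1$. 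The chain of reductions is sound: the translation between ``minimal generators of $\rgin(I)$ lying in $S_{(i)}$'' and ``minimal generators of the ideal $\rgin(I)\cap S_{(i)}$ of $S_{(i)}$'' is an easy check for monomial ideals (a minimal generator of $B$ contained in $S_{(i)}$ is minimal in $B\cap S_{(i)}$ and conversely), and the simultaneous genericity you flag is handled by intersecting finitely many dense open subsets of $\GL(l)$. What your route buys is brevity and the use of well-documented black boxes; what it costs is exactly those black boxes (the section theorem and crystallization are nontrivial, characteristic-zero results), whereas the sectional-matrix proof is self-contained within the framework the paper is building and, as you yourself observe, crystallization is essentially the same persistence phenomenon in different clothing.
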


\begin{Corollary}\label{cor:genofalldegmax} 
Let $I$ be a homogeneous ideal in $S=K[x_1,\dots,x_l]$
and let $D$ be the highest degree of a minimal generator of $I$.
 Then $\rgin(I)$ has at least one minimal generator of degree~$d$, for all $d\in\{D,\dots,\reg(I)\}$.
\end{Corollary}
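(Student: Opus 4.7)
The plan is to derive this as a direct application of Lemma~\ref{lemma:no_gens_i} with $i = l$, used in contrapositive form and invoked once for each $d \in \{D, \dots, \reg(I)\}$. First I would observe that $\rgin(I)$ has a minimal generator in the top degree $R := \reg(I)$: Theorem~\ref{theo:regginequalideal} gives $\reg(\rgin(I)) = \reg(I) = R$, and the second part of that theorem, applied to the strongly stable ideal $\rgin(I)$, identifies this number with the highest degree of a minimal generator of $\rgin(I)$. So a minimal generator of degree exactly $R$ is guaranteed.

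Next I would fix an arbitrary $d \in \{D, \dots, R\}$. Since every minimal generator of $I$ has degree at most $D \le d$, the ideal $I$ is generated in degree $\le d$, hence it satisfies the hypothesis of Lemma~\ref{lemma:no_gens_i} with the ``$D$'' of that lemma replaced by $d$. Applying the lemma with $i = l$, so $S_{(i)} = S$, the contrapositive reads: if $\rgin(I)$ has a minimal generator of some degree $\ge d$ in $S$, then it has a minimal generator of degree exactly $d$. Since the generator of degree $R \ge d$ has already been produced, this yields the required minimal generator in degree $d$, and since $d$ was arbitrary we are done.

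The only substantive point, and the place where care is needed, is the observation that Lemma~\ref{lemma:no_gens_i} is parameterized by any upper bound on the generating degree of $I$, not only the sharpest one $D$. Letting $d$ sweep from $D$ up to $R$ keeps the hypothesis ``$I$ generated in degree $\le d$'' valid, so the lemma can be invoked afresh at every such $d$. Beyond this reparameterization trick there is nothing technically delicate in the argument, and I would expect the written proof to occupy only a few lines.
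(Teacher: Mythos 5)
Your proof is correct and takes essentially the same route as the paper's: both rely on Theorem~\ref{theo:regginequalideal} to produce a minimal generator of $\rgin(I)$ in degree $\reg(I)$, and on Lemma~\ref{lemma:no_gens_i} with $i=l$ (applied with the degree bound relaxed from $D$ to the current degree $d$) to propagate the existence of generators down to every degree in $\{D,\dots,\reg(I)\}$. The only cosmetic differences are that the paper obtains the degree-$D$ generator from Remark~\ref{rem:genrgin} and states the lemma in its direct rather than contrapositive form.
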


\begin{proof} 
If $I=S$, then the Corollary is trivially true.

Suppose now $I\subsetneq S$.
By Remark~\ref{rem:genrgin}, $\rgin(I)$ has a minimal generator
of degree~$D$, and, by Theorem~\ref{theo:regginequalideal}
$\reg(I)$ is the highest degree of a minimal generator of $\rgin(I)$.

Now, by Lemma~\ref{lemma:no_gens_i} for $i=l$ we know that
if $\rgin(I)$ has no minimal generators of degree $d>D$
then $\rgin(I)$ has no minimal generators of any degree $\ge d$.
Thus we conclude that $d> \reg(I)$.
\end{proof}

\section{Hyperplane arrangements and generic initial ideals}\label{sec:freearrgin}

In this section we present our first characterization of freeness for a central hyperplane arrangement $\A$ in $K^l$, 
where $K$ is a field of characteristic zero. 
We characterize freeness by looking at the generic initial ideal of the Jacobian ideal $J(\A)$ of $\A$.

\medskip

Before presenting our first characterization, we describe some of the properties of $\rgin(J(\A))$ without assuming $\A$ to be free.

Since $J(\A)$ is a homogeneous ideal generated by $l$ polynomials of degree $n{-}1$,
the following lemma is just a rewriting of Corollary
\ref{cor:genofalldegmax} for the case $I=J(\A)$.

\begin{Lemma}\label{lemm:genofalldeg} 
Let $\A=\{H_1,\dots, H_n\}$ be a central arrangement in $K^l$. Then
$\rgin(J(\A))$ has at least one minimal generator of degree~$d$, for
all $d\in\{n{-}1,\dots,\reg(J(\A))\}$, 
and no minimal generator outside that range.
\end{Lemma}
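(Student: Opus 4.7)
The plan is to apply Corollary~\ref{cor:genofalldegmax} directly to $I = J(\A)$, after identifying the value of $D$ in our setting. The key observation, already recalled in the Remark of \S3, is that for a central arrangement the ideal $J(\A)$ is homogeneous and generated by at most $l$ polynomials all of degree $n{-}1$. Hence the highest degree of a minimal generator of $J(\A)$ is exactly $D = n{-}1$ (assuming $J(\A) \neq 0$; if $J(\A) = S$ the statement is vacuous, and if $J(\A)=0$ the arrangement is trivial).

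With $D = n{-}1$ in hand, Corollary~\ref{cor:genofalldegmax} applied to $J(\A)$ immediately yields the existence of at least one minimal generator of $\rgin(J(\A))$ in every degree $d \in \{n{-}1, \dots, \reg(J(\A))\}$. This takes care of the first half of the statement.

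For the second half, I would treat the two sides of the range separately. On the low side, since all generators of $J(\A)$ have degree $n{-}1$, we have $J(\A)_d = 0$ for every $d < n{-}1$; this property is preserved by any change of coordinates $g \in \GL(l)$, so $g(J(\A))_d = 0$ and thus $\LT_\sigma(g(J(\A)))_d = 0$ as well. By Theorem~\ref{thm:Galligo} this forces $\rgin(J(\A))_d = 0$, so in particular there can be no minimal generator in degree $<n{-}1$. On the high side, since $\rgin(J(\A))$ is strongly stable, Theorem~\ref{theo:regginequalideal} tells us that its regularity coincides with the highest degree of a minimal generator; combined with $\reg(J(\A)) = \reg(\rgin(J(\A)))$, this rules out minimal generators in any degree $> \reg(J(\A))$.

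There is no real obstacle here: the statement is a direct specialization of Corollary~\ref{cor:genofalldegmax}, and the only piece of input specific to arrangements is the uniform degree $n{-}1$ of the generators of $J(\A)$, which makes $D$ coincide with the minimum possible degree of a generator so that the interval $\{n{-}1, \dots, \reg(J(\A))\}$ captures exactly the range of generator degrees.
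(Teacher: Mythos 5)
Your proposal is correct and follows essentially the same route as the paper, which simply observes that the lemma is Corollary~\ref{cor:genofalldegmax} applied to $I=J(\A)$ with $D=n{-}1$ (all generators of $J(\A)$ having degree $n{-}1$ by the Euler relation). Your extra care in justifying the ``no minimal generator outside that range'' clause --- vanishing of $\rgin(J(\A))_d$ for $d<n{-}1$ under generic coordinate changes, and Theorem~\ref{theo:regginequalideal} for degrees above $\reg(J(\A))$ --- is sound and merely makes explicit what the paper leaves implicit.
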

\begin{Remark} 
Notice that in general 
$\rgin$ of a homogeneous ideal may be generated in non-consecutive degrees.
For example $B=(x^2,xy,y^5)\subset K[x,y]$ is strongly stable,
thus $\rgin(B) = B$ has minimal generators only in degree $2$ and $5$.
\end{Remark}


\begin{Lemma}\label{lemm:yalwaysgen} 
Let $\A$ be a central arrangement in $K^l$. Then there exist $\alpha\ge 1$ such that $x_2^\alpha\in\rgin(J(\A))$. 
In other words, using the language of Section~\ref{sec:SectionalMatrix}, the $(l-2)$-reduction number $r_{l-2}(S/J(\A))$ is finite.
\end{Lemma}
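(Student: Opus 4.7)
The plan is to exploit two ingredients: the geometric bound $\dim(S/J(\A)) \le l-2$, observed in the excerpt right after the definition of the Jacobian ideal (since $J(\A)$ cuts out the singular locus of $Q(\A)=0$, which in the central case is a union of codimension $2$ linear subspaces), and the strong stability of $\rgin(J(\A))$ from Theorem~\ref{thm:Galligo}. Since the generic initial ideal preserves the Hilbert function (and hence the Krull dimension), we obtain $\dim(S/\rgin(J(\A))) = \dim(S/J(\A)) \le l-2$.

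Setting $B := \rgin(J(\A))$, I would reduce the lemma to the purely combinatorial statement: if $B$ is a strongly stable monomial ideal in $S = K[x_1, \dots, x_l]$ with $\dim(S/B) \le l-2$, then some pure power of $x_2$ lies in $B$. The case $B=S$ is immediate, so assume $B \subsetneq S$. The key step is to show that $\sqrt{B}$ is generated by an initial segment of the variables. Indeed, suppose $x_j^N \in B$ for some $j$ and $N \ge 1$; then iteratively applying the strong stability property, swapping one $x_j$ for $x_i$ with $i<j$, yields $x_i x_j^{N-1} \in B$, then $x_i^2 x_j^{N-2} \in B$, and finally $x_i^N \in B$. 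Hence $\{i : x_i \in \sqrt{B}\}$ is an initial segment $\{1,\dots,k\}$, and $\sqrt{B} = (x_1,\dots,x_k)$.

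Combining the two ingredients, $l-k = \dim(S/\sqrt{B}) = \dim(S/B) \le l-2$ forces $k \ge 2$, so $x_2 \in \sqrt{B}$, which means $x_2^\alpha \in \rgin(J(\A))$ for some $\alpha \ge 1$. The reformulation in terms of the $(l{-}2)$-reduction number $r_{l-2}(S/J(\A))$ will be immediate from the definitions introduced later in Section~\ref{sec:SectionalMatrix}. I do not foresee a serious obstacle: the deepest fact in play is that $\rgin$ preserves the Hilbert function, which is standard (since $\rgin(I)$ is the $\LT$-ideal with respect to DegRevLex of a generic linear change of coordinates of $I$); the remaining steps are elementary manipulations with strong stability.
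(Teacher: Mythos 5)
Your proof is correct and follows essentially the same route as the paper's: the paper also argues that $S/J(\A)$ is $0$ or $(l{-}2)$-dimensional, so the strongly stable ideal $\rgin(J(\A))$ must contain powers of $x_1$ and $x_2$. You merely make explicit the combinatorial step (that for a strongly stable ideal $B$ the set of variables in $\sqrt{B}$ is an initial segment and $\sqrt{B}=(x_1,\dots,x_k)$ with $l-k=\dim(S/B)$) which the paper leaves implicit.
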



\begin{proof} 
By construction,
 $S/J(\A)$ is $0$ or $(l-2)$-dimensional,
thus $\rgin(J(\A))$ is $J(\A) = S$ or it must contain some powers of $x_1$
  and $x_2$ because it is strongly
  stable. Hence, in either case, there exists a positive power of $x_2$
in $\rgin(J(\A))$.
The second part of the statement then follows from 
the Definition \ref{def:reductnumb} of the $2$-reduction number in terms of the
sectional matrix.
\end{proof}


We are now ready to present our first characterization.
\begin{Theorem}\label{theo:firstequivfregin} 
Let $\A =\{H_1, \dots, H_n\}$ be a central arrangement in $K^l$. 
Then $\A$ is free if and only if $\rgin(J(\A))$ is $S$ or 
its minimal generators include $x_1^{n-1}$, some positive power of
$x_2$, and no monomials in $x_3,\dots, x_l$. 


More precisely, if $\A$ is free, then 
$\rgin(J(\A))$ is $S$ or it is minimally generated by
$$x_1^{n-1},\; x_1^{n-2}x_2^{\lambda_1},\; \dots,\; x_2^{\lambda_{n-1}}$$ 
with $1\le\lambda_1<\lambda_2<\cdots<\lambda_{n-1}$ and $\lambda_{i{+}1}-\lambda_i= 1$ or $2$.
\end{Theorem}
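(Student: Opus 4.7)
The overall plan is to deduce both parts from Terao's criterion (Theorem~\ref{theo:freCMcod2}) combined with the Bayer--Stillman theorem (Theorem~\ref{theo:ginCM}), using that $S/J(\A)$ is either zero or of dimension exactly $l-2$. The key technical observation needed throughout is: for any monomial ideal $B \subseteq S$, a variable $x_k$ is a non-zero-divisor on $S/B$ if and only if no minimal generator of $B$ is divisible by $x_k$; for a strongly stable $B$, an induction using Lemma~\ref{lem:gingensequiv} then shows that $x_l, x_{l-1}, \dots, x_k$ is a regular sequence on $S/B$ if and only if no minimal generator involves any of $x_k, \dots, x_l$.

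For the main equivalence, in the forward direction freeness gives $J(\A) = S$ (whence $\rgin(J(\A)) = S$) or else $S/J(\A)$ Cohen-Macaulay of dimension $l-2$; in the second case Theorem~\ref{theo:ginCM} supplies the regular sequence $x_l, \dots, x_3$ for $S/\rgin(J(\A))$, forcing every minimal generator to lie in $K[x_1, x_2]$. Lemma~\ref{lemm:yalwaysgen} yields a positive power of $x_2$ among them, and the degree-$(n{-}1)$ minimal generator supplied by Lemma~\ref{lemm:genofalldeg} together with strong stability drags $x_1^{n-1}$ into the ideal, forced minimal since $n-1$ is the smallest generator-degree. Conversely, the hypothesised generator shape makes $x_l, \dots, x_3$ a regular sequence on $S/\rgin(J(\A))$ and forces $\dim(S/\rgin(J(\A))) \le l-2$ via the pure powers of $x_1$ and $x_2$, matching $\dim(S/J(\A)) = l-2$ through equality of Hilbert functions; hence $S/\rgin(J(\A))$ is Cohen-Macaulay, and Theorem~\ref{theo:ginCM} plus Terao's criterion conclude.

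For the finer statement, restrict to $B := \rgin(J(\A)) \cap K[x_1, x_2]$, an Artinian strongly stable monomial ideal, and list its minimal generators by increasing $x_2$-exponent as
$$ x_1^{d_0},\ x_1^{d_1}x_2^{c_1},\ \ldots,\ x_1^{d_s}x_2^{c_s}, $$
with $n-1 = d_0 > d_1 > \cdots > d_s \ge 0$ and $0 = c_0 < c_1 < \cdots < c_s$. Applying strong stability to $x_1^{d_i}x_2^{c_i}$ produces $x_1^{d_i+1}x_2^{c_i-1}$, divisibility of which by some earlier generator (necessarily the one indexed $i-1$, since $c_{i-1} \le c_i - 1$) forces $d_{i-1} \le d_i + 1$, and combined with $d_{i-1} > d_i$ yields $d_{i-1} = d_i + 1$. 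This pins down $d_i = n-1-i$ and $s = n-1$, giving the listed form with $\lambda_i := c_i$. Finally, the $i$-th generator has degree $n-1+\lambda_i - i$ (with $\lambda_0 = 0$); Lemma~\ref{lemm:genofalldeg} demands a minimal generator in every degree from $n-1$ up to $\reg(J(\A))$, so the consecutive increments $\lambda_{i+1}-\lambda_i - 1$ must lie in $\{0,1\}$, i.e., $\lambda_{i+1}-\lambda_i \in \{1,2\}$. The main obstacle is the clean bookkeeping of the correspondence between regular sequences on $S/\rgin(J(\A))$ and the absence of $x_3, \dots, x_l$ from its minimal generators; the rest is a careful but routine analysis of strongly stable ideals in two variables.
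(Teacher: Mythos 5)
Your proposal is correct and follows essentially the same route as the paper's proof: Terao's criterion plus the Bayer--Stillman theorem to translate freeness into the Cohen--Macaulay property of $\rgin(J(\A))$, Lemma~\ref{lemm:yalwaysgen} for the power of $x_2$, and Lemma~\ref{lemm:genofalldeg} for the ``no holes'' condition giving $\lambda_{i+1}-\lambda_i\in\{1,2\}$. The only difference is that you spell out in more detail two steps the paper treats briefly, namely the strong-stability argument showing that a codimension-two Cohen--Macaulay strongly stable ideal must be the lex-segment $x_1^{n-1}, x_1^{n-2}x_2^{\lambda_1},\dots,x_2^{\lambda_{n-1}}$, and the derivation of $x_1^{n-1}$ as a minimal generator; both are carried out correctly.
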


\begin{proof} 
By Theorem~\ref{theo:freCMcod2}, $\A$ is free if and only if $S/J(\A)$ is $0$ or $(l{-}2)$-dimensional Cohen-Macaulay. 
Clearly, the ring $S/J(\A)$ is $0$ 
if and only if $\rgin(J(\A))=S$.
Suppose now that $J(\A)\subsetneq S$. 
Since $J(\A)$ is an ideal generated by $l$ homogenous polynomials of degree $n{-}1$, then $x_1^{n-1}$ is a minimal generator of
$\rgin(J(\A))$. By Theorem~\ref{theo:ginCM}, $J(\A)$ is Cohen-Macaulay of codimension 2 if and only if $\rgin(J(\A))$ is Cohen-Macaulay of codimension 2, 
and this is equivalent to $\rgin(J(\A))$ having a power of $x_2$ as
minimal generator and no minimal generators in $x_3,\dots, x_l$. 

Under these constrains, 
the only possible strongly stable ideals are the lex-segment ideals, minimally
generated by
 $x_1^{n-1},x_1^{n-2}x_2^{\lambda_1},\dots,x_2^{\lambda_{n-1}}$, 
with $1\le\lambda_1<\lambda_2<\cdots<\lambda_{n-1}$.
Notice that there must be exactly one generator for each power of $x_1$ from $n-1$ to $0$, 
so there are exactly $n = \#\A$ generators.
Finally, if $\A$ is free, from Lemma~\ref{lemm:genofalldeg} we know that there are no ``holes'' in the
sequence of the degrees of the minimal generators, and this translates into
$\lambda_{i{+}1}-\lambda_i= 1$ (same degree) or $2$ (consecutive degrees).
\end{proof}
\begin{Example}\label{ex:recurring}
Consider the arrangement $\A$ in $\C^3$ defined by the equation $Q(\A)=xyz(x+y)(x-y)$.
Then the generic initial ideal of its Jacobian ideal is $\ideal{x^4,x^3y,x^2y^2,xy^4,y^6}$ and hence $\A$ is free.

Similarly consider the arrangement $\A$ in $\C^3$ defined by the equation $Q(\A)=x(x+y-z)(x+z)(x+2z)(x+y+z)$.
Then the generic initial ideal of its Jacobian ideal is $\ideal{x^4, x^3y, x^2y^2, xy^4, y^5, xy^3z^2}$. Since $z$ divides 
 a minimal generator of $\rgin(J(\A))$, then $\A$ is not free.
\end{Example}

\begin{Remark} By Lemma~\ref{lemm:yalwaysgen}, the previous theorem is a new proof of the known fact that any central line arrangement in the plane is free.
\end{Remark}

%
%

We conclude the section with a conjecture about the generic initial ideal of a central arrangement not necessarily free.

\begin{Conjecture}\label{conj:generatZ} Let $\A =\{H_1, \dots, H_n\}$
  be a central arrangement in $K^l$, and $d_0=\min\{d~|~x_2^{d+1}\in\rgin(J(\A))\}$. 
  If $\rgin(J(\A))$ has a minimal generator $T$ that involves the third variable of $S$, then $\deg(T)\ge d_0+1$.
\end{Conjecture}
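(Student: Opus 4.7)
The strategy is to rephrase the conjecture as a Hilbert function identity between $S/J(\A)$ and the ``bivariate closure'' of $\rgin(J(\A))$, and then to prove this identity via the sectional matrix framework.

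Step 1 (strong stability reduction). Assume for contradiction that $T = x_1^{a_1} x_2^{a_2} x_3^{a_3} \cdots x_l^{a_l}$ is a minimal generator of $B := \rgin(J(\A))$ with $a_3 \ge 1$ and $d := \deg T \le d_0$. By iteratively applying strong stability to replace each $x_j$ (for $j \ge 3$) with $x_2$, the monomial $x_1^{a_1} x_2^{d - a_1}$ lies in $B$. Since $d - a_1 \le d_0$ and $x_2^{d_0 + 1}$ is the least power of $x_2$ in $B$, we must have $a_1 \ge 1$; hence $x_1^{a_1} x_2^{d - a_1}$ lies in $B_{(2)} := B \cap K[x_1, x_2]$ and is properly divisible by $x_1$. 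Minimality of $T$ forces $x_1^{a_1} x_2^{a_2} \notin B$, since otherwise $T$ would be a proper multiple of it.

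Step 2 (reformulation). The existence of such a $T$ is equivalent to the strict inequality $\dim_K B_d > \dim_K (B_{(2)} S)_d$ for some $d \le d_0$, where $B_{(2)} S$ is the extension of $B_{(2)}$ to $S$. Since $S / B_{(2)} S \cong (K[x_1, x_2] / B_{(2)}) \otimes_K K[x_3, \ldots, x_l]$ as graded vector spaces, its Hilbert series factors as $\mathrm{HS}_{K[x_1, x_2]/B_{(2)}}(t) \cdot (1 - t)^{-(l - 2)}$. Thus the conjecture becomes the identity $\mathrm{HF}_{S/J(\A)}(d) = \mathrm{HF}_{S/B_{(2)} S}(d)$ for all $d \le d_0$; equivalently, no minimal generator of $B$ in degree $\le d_0$ involves $x_3, \ldots, x_l$.

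Step 3 (sectional matrix). I would then invoke the sectional matrix machinery of Section~\ref{sec:SectionalMatrix}. Since $d_0 = r_{l-2}(S/J(\A))$ by Lemma~\ref{lemm:yalwaysgen} and Definition~\ref{def:reductnumb}, the column of the sectional matrix corresponding to a generic $2$-plane section stabilizes precisely at row $d_0$; this column records the Hilbert function of $K[x_1, x_2]/B_{(2)}$ after a generic change of coordinates. The identity of Step 2 should then follow from the general sectional matrix formalism, by showing that in degrees $\le d_0$ the ``extra'' Hilbert function contribution beyond the bivariate one vanishes.

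The main obstacle is carrying out Step 3 rigorously. Pure strongly stable combinatorics does not suffice, because it cannot rule out the case $a_1 \ge 1$: one must use that $J(\A)$ is generated in the single degree $n-1$ by the partial derivatives of $Q(\A)$, with codimension at least $2$, and that the Euler relation $nQ(\A) = \sum_{i=1}^l x_i \partial_i Q(\A)$ forces a tight syzygy structure on these partials. A promising route is to refine Lemma~\ref{lemma:no_gens_i} by separating the roles of the variables $x_1, x_2$ from $x_3, \ldots, x_l$, so that in degrees $\le d_0$ one directly compares the growth of $B$ and of $B_{(2)} S$ column by column in the sectional matrix; an alternative route is a Hilbert--Burch-style analysis of the resolution of $J(\A)$ that bounds from below the degrees at which multivariate minimal generators of $\rgin(J(\A))$ can first appear.
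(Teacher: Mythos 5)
This statement is stated in the paper as a \emph{Conjecture}, and the paper offers no proof of it; indeed, immediately afterwards the authors exhibit the strongly stable ideal $B=\langle x^2,xy,xz,y^3\rangle$ (for which $d_0=2$ and $xz$ is a minimal generator of degree $2$ involving the third variable) precisely to show that the statement fails for general strongly stable ideals. So there is no proof in the paper against which to compare your argument, and your proposal must be judged on its own.

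On its own terms, your proposal has a genuine gap, which you yourself flag. Steps 1 and 2 are correct but are only reductions: Step 1 correctly shows $a_1\ge 1$ and that a putative bad generator $T$ cannot be eliminated by strong stability alone, and Step 2 correctly rephrases the conjecture as the equality $B_d=(B_{(2)}S)_d$ for $d\le d_0$, i.e.\ as a Hilbert function identity. But Step 3, which is where all the content lies, is not carried out: you say the identity ``should then follow from the general sectional matrix formalism,'' yet the counterexample above shows that no amount of general formalism about strongly stable ideals, sectional matrices, or reduction numbers can close the argument --- the statement is simply false at that level of generality. What is missing is the identification of the specific property of $J(\A)$ (generation in the single degree $n-1$ by the $l$ partials of a product of linear forms, the Euler relation, the structure of the syzygies of these partials, or something finer) that propagates through the passage to $\rgin$ and rules out low-degree generators involving $x_3$. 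You gesture at two possible routes (refining Lemma~\ref{lemma:no_gens_i}, or a Hilbert--Burch-style analysis), but neither is executed, and it is not clear that either would succeed; as far as the paper is concerned the question remains open. In short: your reductions are sound and potentially useful scaffolding, but the proof attempt does not prove the statement.
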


\begin{Example}
In Example~\ref{ex:recurring} we had a non free arrangement whose
$\gin$ is $\ideal{x^4, x^3y, x^2y^2, xy^4, y^5, xy^3z^2}$,
and we observe that $\deg(xy^3z^2) = 6>5 = \deg(y^5)$.
The previous statement is false in general 
as shown by the strongly stable ideal
$B = \rgin(B) = \ideal{x^2, xy, xz, y^3}$.
\end{Example}

\section{Sectional matrix}\label{sec:SectionalMatrix}

The definition of the Hilbert function of a homogenous ideal in $S$ was extended in \cite{bigatti1997borel} to
the definition of \textbf{the sectional matrix}: the bivariate function encoding the Hilbert functions of the generic hyperplane sections. 
In this section, we recall the definition and basic properties of the sectional matrix for the quotient algebra $S/I$, as described in \cite{BPT2016}.
Then we present some new results that will play an important role in the characterization of Section \ref{sec:freearrsectmat}.

\begin{Definition}\label{def:SM} 
Given a homogeneous ideal $I$ in $S=K[x_1,\dots,x_l]$, the
\textbf{sectional matrix} 
of $S/I$ is the function
$\{1,\dots,l\}\times \NN \longrightarrow \NN$
\begin{eqnarray*}
\M_{S/I}(i,d) &=& \dim_K(S_d/(I+(L_1,\dots,L_{l-i}))_d),
\end{eqnarray*}
where $L_1,\dots,L_{l-i}$ are generic linear forms.
\end{Definition}

The following result reduces the study of the sectional matrix of
a homogeneous ideal to the combinatorial behaviour of a monomial ideal.

\begin{Theorem}[Lemma~5.5, \cite{bigatti1997borel}]\label{theo:rgin}
Let $I$ be a homogeneous ideal in $S = K[x_1,\dots, x_l]$. 
Then 
$$\M_{S/I} (i,d) = \M_{S/\rgin(I)}(i,d) =
\dim_K(S_d/(\rgin(I)+(x_{i+1},\dots,x_l))_d).$$
\end{Theorem}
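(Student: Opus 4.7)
The plan is to chain together three invariance principles in order to convert the generic computation defining $\M_{S/I}(i,d)$ into a combinatorial one involving the specific variables $x_{i+1},\dots,x_l$ and the monomial ideal $\rgin(I)$: (a) invariance of Hilbert functions under linear change of coordinates, (b) invariance of Hilbert functions under passing to the initial ideal, and (c) a compatibility property specific to DegRevLex between the initial ideal functor and reduction modulo the smallest variables.

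First, I would unfold Definition~\ref{def:SM} and invoke Galligo's Theorem~\ref{thm:Galligo}: pick a generic $g\in\GL(l)$ witnessing $\LT_{\text{DegRevLex}}(g(I))=\rgin(I)$. Since $g$ preserves Hilbert functions,
\[
\M_{S/I}(i,d)\;=\;\dim_K S_d/(g(I)+(g(L_1),\dots,g(L_{l-i})))_d
\]
for generic linear forms $L_1,\dots,L_{l-i}$. By the double genericity of $g$ and of the $L_j$, I can replace the tuple $(L_j)$ by $(g^{-1}(x_{l-j+1}))$ inside the Zariski open set of generic $(l-i)$-tuples; thus $g(L_j)=x_{l-j+1}$ and the right-hand side becomes $\dim_K S_d/(g(I)+(x_{i+1},\dots,x_l))_d$.

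Next, I would use the reverse-lex specific fact that, within each total degree, every monomial involving the smallest variable $x_l$ is strictly smaller than every monomial avoiding $x_l$. From this I can derive, for any homogeneous ideal $J\subseteq S$, the identity
\[
\LT_{\text{DegRevLex}}(J+(x_l))\;=\;\LT_{\text{DegRevLex}}(J)+(x_l):
\]
the inclusion $\supseteq$ is immediate, while for $\subseteq$ one observes that for $f\in J$ either $\LT(f)$ avoids $x_l$ (and therefore coincides with $\LT(f|_{x_l=0})$) or $\LT(f)$ is divisible by $x_l$, so it lies in $(x_l)$. Iterating this identity inside the quotient rings $K[x_1,\dots,x_{l-1}],\,K[x_1,\dots,x_{l-2}],\dots$ yields
\[
\LT_{\text{DegRevLex}}(J+(x_{i+1},\dots,x_l))\;=\;\LT_{\text{DegRevLex}}(J)+(x_{i+1},\dots,x_l),
\]
and, combined with (b), gives $\dim_K S_d/(g(I)+(x_{i+1},\dots,x_l))_d=\dim_K S_d/(\rgin(I)+(x_{i+1},\dots,x_l))_d$. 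Chaining this with the first step proves $\M_{S/I}(i,d)=\dim_K S_d/(\rgin(I)+(x_{i+1},\dots,x_l))_d$. Applying the identical argument to $\rgin(I)$ in place of $I$ — using $\rgin(\rgin(I))=\rgin(I)$ because strongly stable ideals are Borel-fixed, so $g$ may be taken to be the identity — then gives $\M_{S/\rgin(I)}(i,d)=\dim_K S_d/(\rgin(I)+(x_{i+1},\dots,x_l))_d$, closing the loop.

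The main obstacle, and really the only nontrivial step, is the DegRevLex compatibility $\LT(J+(x_l))=\LT(J)+(x_l)$: this is the sole place where the choice of monomial order matters, and its analogue fails for orders such as Lex. Everything else is bookkeeping of genericity and Hilbert-function invariance under $g$ and under passing to $\LT_\sigma$.
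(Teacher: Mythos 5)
The paper does not actually prove this statement: it is imported verbatim as Lemma~5.5 of \cite{bigatti1997borel}, so there is no internal proof to compare against. Your reconstruction follows the standard route (the one used in the cited source and in Green's and Eisenbud's treatments of generic initial ideals), and its architecture is sound: change coordinates by a Galligo-generic $g$, use that the tuple $(g^{-1}(x_{i+1}),\dots,g^{-1}(x_l))$ is itself a generic tuple of linear forms (the map $g\mapsto(g^{-1}(x_{i+1}),\dots,g^{-1}(x_l))$ is dominant, so the good open sets can be intersected), and then exploit the DegRevLex identity $\LT(J+\ideal{x_l})=\LT(J)+\ideal{x_l}$ together with the invariance of Hilbert functions under $\GL(l)$ and under passage to leading term ideals. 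You correctly isolate that identity as the only step where the choice of DegRevLex matters.

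Two small imprecisions are worth fixing. First, your argument for the inclusion $\LT(J+\ideal{x_l})\subseteq\LT(J)+\ideal{x_l}$ only examines $f\in J$, but the elements you must control are $f=h+x_lg$ with $h\in J$; the missing ingredient is the DegRevLex fact that if $x_l$ divides $\LT(f)$ then $x_l$ divides every term of $f$. With that, if $x_l\nmid\LT(f)$ then $x_l\nmid\LT(h)$ (otherwise $h$, hence $f$, would be divisible by $x_l$), so $\LT(f)=\LT(f|_{x_l=0})=\LT(h|_{x_l=0})=\LT(h)\in\LT(J)$. Second, in the last step you cannot literally take $g=\mathrm{id}$, since the identity need not lie in Galligo's open set $U$; the correct justification of $\rgin(\rgin(I))=\rgin(I)$ is that $U$ meets the dense big cell, so one may take $g=ub$ with $b$ upper triangular (which fixes the Borel-fixed ideal $B=\rgin(I)$) and $u$ lower triangular unipotent (for which $\LT(u(B))=B$). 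Both are standard repairs and do not affect the validity of your overall argument.
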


\begin{Remark}\label{rem:stronglystable}
Theorem~\ref{theo:rgin} shows that when we have a strongly stable
ideal $B\in S$ (and in particular $\rgin(I)$ is strongly stable) the
sectional matrix of ${S/B}$ is particularly easy to compute because
sectioning $B$ by $l{-}i$ generic linear forms is the same as
sectioning $B$ by the smallest $l{-}i$ indeterminates, $x_{i{+}1},\dots, x_l$.
\end{Remark}

The following results show, for a strongly stable ideal $B$, the link between having no generators and a recurrence in the sectional matrix.

\begin{Proposition}[\cite{BPT2016}]\label{prop: BReq_rgingen} 
Let $B$ be a strongly stable ideal in the polynomial ring $S=K[x_1, \ldots, x_l]$. 
Then $\M_{S/B}(i,d{+}1) =\sum\limits_{j=1}^i
\M_{S/B}(j,d)$ if and only if 
$B$ has no minimal generators in degree $d{+}1$ in $x_1,\dots, x_i$.
\end{Proposition}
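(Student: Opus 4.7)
The plan is to interpret both sides of the equality through explicit sets of monomials and to identify their difference as a count of minimal generators. By Theorem~\ref{theo:rgin} and Remark~\ref{rem:stronglystable}, for a strongly stable ideal $B$ one has $\M_{S/B}(i,d) = \#N(i,d)$, where $N(i,d)$ denotes the set of power-products of degree $d$ in $x_1,\dots,x_i$ that do not lie in $B$. Rewriting the proposition in these terms, it suffices to show
$$\sum_{j=1}^i \#N(j,d) - \#N(i,d+1) = \#\{T : T \text{ is a minimal generator of } B,\ \deg T = d+1,\ \max(T)\le i\},$$
where $\max(T)$ denotes the largest index of a variable dividing $T$.

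The key construction is the map
$$\phi \colon N(i,d+1) \longrightarrow \bigsqcup_{j=1}^{i} N(j,d), \qquad \phi(m) = m/x_{\max(m)},$$
where the image lies in the component indexed by $j=\max(m)$. The map is well defined, since $B$ is an ideal and $m\notin B$ forces $m/x_{\max(m)}\notin B$, and injective, since both $\max(m)$ and $m/x_{\max(m)}$ can be read off from $\phi(m)$.

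The next step identifies what $\phi$ misses, using strong stability through the following simple observation: \emph{for a strongly stable ideal $B$, a monomial $T\in B$ is a minimal generator if and only if $T/x_{\max(T)}\notin B$}. Indeed, if $T/x_k\in B$ for some $k<\max(T)=:j$, then $x_j$ divides $T/x_k$ and the strongly stable property applied with the pair $(k,j)$ gives $T/x_j = (x_k/x_j)(T/x_k) \in B$, contradicting $T/x_{\max(T)}\notin B$. Applying this to $T = x_j m'$ with $m' \in N(j,d)$ (so $\max(T)=j$), the element $m'$ fails to be in the image of $\phi$ precisely when $x_j m' \in B$, which by the observation is the same as $x_j m'$ being a minimal generator of $B$ of degree $d+1$ with $\max \le i$. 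Conversely every such minimal generator $T$ of $B$ of degree $d+1$ in $x_1,\dots,x_i$ arises uniquely in this way by setting $j=\max(T)$ and $m'=T/x_j$. This yields the displayed counting formula, and the proposition follows: the equality $\M_{S/B}(i,d+1) = \sum_{j=1}^i \M_{S/B}(j,d)$ holds iff the count on the right vanishes, i.e. iff $B$ has no minimal generators of degree $d+1$ in $x_1,\dots,x_i$.

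The only delicate step is the strongly stable characterization of minimal generators in terms of the largest variable; once that is in hand, the rest is a clean bookkeeping bijection that matches the two sides variable-by-variable.
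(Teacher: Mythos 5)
Your proof is correct. Note that the paper does not actually prove this proposition: it is quoted from \cite{BPT2016} and used as a black box, so there is no internal proof to compare against. Your argument is nevertheless exactly in the spirit of the paper's own proof of Theorem~\ref{theo:sectmatrixgenxk} (the ``triangle'' analogue), which likewise reduces to counting monomials of $K[x_1,\dots,x_i]$ outside $B$, decomposes them by dividing out a variable, and uses strong stability to show that the failure of surjectivity is accounted for precisely by minimal generators --- your key observation that $T\in B$ is a minimal generator iff $T/x_{\max(T)}\notin B$ is the same strong-stability step that appears there. The only difference is cosmetic: you divide by $x_{\max(m)}$ to compare row $i$ in degree $d{+}1$ with the partial column sum $\sum_{j\le i}\M_{S/B}(j,d)$, whereas Theorem~\ref{theo:sectmatrixgenxk} divides by $x_i$ alone to compare with $\M_{S/B}(i{-}1,d)+\M_{S/B}(i,d{-}1)$; your version is the stronger equality, as Example~\ref{ex:first} in the paper points out. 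The bookkeeping (well-definedness and injectivity of $\phi$, and the bijection between missed elements and minimal generators of degree $d{+}1$ with $\max\le i$) is all in order.
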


\begin{Theorem}\emph{(Theorem 4.5, \cite{bigatti1997borel})}
\label{thm:BR_Persistence_Bor}
Let $B$ be a strongly stable ideal in $S=K[x_1,\dots, x_l]$ with generators of degree $\le D$. Then
\begin{enumerate}
\item\label{BR_PersistenceSum_Bor} $\M_{S/B}(i, d{+}1) = \sum_{j=1}^i \M_{S/B}(j,d)$ 
for all $d\ge D$ and $i=1,\dots,l$.
\item\label{BR_PersistanceSum} $\M_{S/B}(i,d{+}1)=\M_{S/B}(i{-}1,d{+}1)+\M_{S/B}(i,d)$,
for all $d\ge D$ and $i=1,\dots, l$.
\end{enumerate}
\end{Theorem}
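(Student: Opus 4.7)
The plan is to derive both parts as essentially immediate corollaries of Proposition~\ref{prop: BReq_rgingen}, exploiting the hypothesis that every minimal generator of $B$ has degree at most $D$.

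For part~(1), I would fix $d \ge D$ and $i \in \{1,\dots,l\}$. Since $d+1 > D$, the degree bound forces $B$ to have no minimal generators of degree $d+1$ at all, and in particular no minimal generators of degree $d+1$ lying in the subring $K[x_1,\dots,x_i]$. Proposition~\ref{prop: BReq_rgingen} then applies and yields directly
$$\M_{S/B}(i,d+1) = \sum_{j=1}^i \M_{S/B}(j,d),$$
which is the claim.

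For part~(2), I would proceed by telescoping from part~(1). Applying (1) at both index $i$ and index $i-1$ (using the natural convention $\M_{S/B}(0,d+1)=0$ to handle the boundary case $i=1$), subtraction gives
$$\M_{S/B}(i, d+1) - \M_{S/B}(i-1, d+1) = \sum_{j=1}^i \M_{S/B}(j,d) - \sum_{j=1}^{i-1} \M_{S/B}(j,d) = \M_{S/B}(i,d),$$
which is precisely the stated recurrence.

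There is no serious obstacle here: once Proposition~\ref{prop: BReq_rgingen} is on the table, the only thing to check is that the global degree bound on minimal generators of $B$ rules out generators of degree $d+1$ in every subring $K[x_1,\dots,x_i]$, which is a trivial inclusion. The only mild care required is in the $i=1$ boundary of part~(2), handled by the stated convention. Conceptually, the theorem is the quantitative expression of the intuition that, once one passes the top degree of the generators, the Hilbert function of the strongly stable ideal behaves in a completely predictable Pascal-triangle fashion, with the two recurrences in (1) and (2) being the ``row-sum'' and ``two-term'' forms of the same phenomenon.
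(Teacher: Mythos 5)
Your argument is correct, but note that the paper does not actually prove this statement: it is imported verbatim as Theorem~4.5 of \cite{bigatti1997borel}, so there is no in-paper proof to compare against. What you have done is show that, taking Proposition~\ref{prop: BReq_rgingen} (itself quoted from \cite{BPT2016}) as given, both claims follow formally: since every minimal generator of $B$ has degree $\le D$, there are in particular no minimal generators of degree $d+1>D$ in any subring $K[x_1,\dots,x_i]$, so the equivalence in Proposition~\ref{prop: BReq_rgingen} gives part~(1), and part~(2) is the telescoped difference of part~(1) at indices $i$ and $i-1$, with the harmless convention $\M_{S/B}(0,d+1)=0$ (consistent with Definition~\ref{def:SM}, where sectioning by all $l$ generic linear forms kills every positive degree). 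This derivation is sound and is arguably the cleanest way to see the result from the material assembled in this paper. The only caveat worth flagging is logical provenance rather than correctness: in the literature the dependence runs the other way (the original proof in \cite{bigatti1997borel} establishes the persistence identities directly from the combinatorics of strongly stable ideals, essentially via Remark~\ref{rem:stronglystable}, by counting power-products of $B_{d+1}\cap K[x_1,\dots,x_i]$ according to their largest variable; the ``if and only if'' refinement in Proposition~\ref{prop: BReq_rgingen} came later). So your proof is not circular within this paper, but one should not present it as the historical or foundational argument.
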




The equality in Theorem~\ref{thm:BR_Persistence_Bor}.(1)
  was then developed into an inequality for homogeneous ideals and
  investigated in \cite{bigatti1997borel} and \cite{BPT2016}.  In this
  paper we develop and exploit the equality in \ref{thm:BR_Persistence_Bor}.(2) (see
  Theorem~\ref{theo:sectmatrixgenxk} below).  

%
%
%



\bigskip

The remaining of this section is devoted to introduceing some new results
on sectional matrices and generic initial ideals. These results are the keys for our second characterization of
freeness for hyperplane arrangement, see
Theorem~\ref{theo:secondcharacterizfree}. In particular, our goal is
to identify the \textit{minimal} number of entries we need to check in
the sectional matrix to ensure that the given ideal is Cohen-Macaulay.



\begin{Theorem}\label{theo:sectmatrixgenxk}
Let $I$ be a non-zero homogeneous ideal in the polynomial ring $S=K[x_1, \ldots , x_l]$, $i\in\{1,\dots, l\}$ and $d\ge1$.
Then $$\M_{S/I}(i,d)\le \M_{S/I}(i{-}1,d)+\M_{S/I}(i,d{-}1).$$
Moreover, the equality holds if and only if $\rgin(I)$ has no minimal generator of degree~$d$ divisible by $x_i$.
\end{Theorem}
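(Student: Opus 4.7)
The plan is to reduce to the strongly stable setting via Theorem~\ref{theo:rgin}, interpret the sectional matrix combinatorially, and then translate the equality condition into a statement about minimal generators of $\rgin(I)$.

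First, set $B:=\rgin(I)$. By Theorem~\ref{theo:rgin} together with Remark~\ref{rem:stronglystable}, one has $\M_{S/I}(i,d) = |N_i(d)|$, where
\[
N_i(d) := \{\,m \in K[x_1,\dots,x_i]_d : m \notin B\,\}.
\]
This replaces the statement with a claim about counting monomials outside the strongly stable ideal $B$, and a characterization of equality in terms of the minimal generators of $B$.

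Second, for the inequality I would partition $N_i(d)$ by divisibility by $x_i$: the monomials in $N_i(d)$ not divisible by $x_i$ give exactly $N_{i-1}(d)$, while those divisible by $x_i$ inject into $N_i(d-1)$ via $m\mapsto m/x_i$, since $m/x_i \in B$ would force $m = x_i \cdot (m/x_i) \in B$, contradicting $m\in N_i(d)$. Summing these two cardinalities yields the desired bound.

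Third, equality holds iff this injection is surjective onto $N_i(d-1)$, equivalently iff every $m'\in K[x_1,\dots,x_i]_{d-1}\setminus B$ satisfies $x_i m'\notin B$. Contrapositively, equality fails iff there exists $m'\in K[x_1,\dots,x_i]_{d-1}\setminus B$ with $x_i m'\in B$.

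Fourth, I would match this witness condition to the existence of a minimal generator of $B$ of degree $d$ divisible by $x_i$. One direction is immediate: from such a minimal generator $g$ (which can be taken to lie in $K[x_1,\dots,x_i]$ via a Borel flip), $m':=g/x_i$ is a witness. For the other direction, given a witness $m'$, examine $x_i m'\in B$: if it is not itself a minimal generator, a smaller minimal generator $g$ of $B$ must divide $x_i m'$, and since $m'\notin B$ rules out $g\mid m'$, one forces $x_i\mid g$. Writing $g=x_i\tilde g$ with $\tilde g\notin B$ gives $\tilde g\mid m'$, so $m'=\tilde g\, r$ for some nontrivial monomial $r\in K[x_1,\dots,x_i]$. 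Here the Borel property of $B$ applied to $g$ is decisive: for every $k<i$ the Borel swap $x_k g/x_i = x_k\tilde g$ lies in $B$, and $k=i$ gives $x_i\tilde g=g\in B$, so $x_k\tilde g\in B$ for all $k\le i$. Choosing any $x_k\mid r$, this writes $m'=(x_k\tilde g)\cdot(r/x_k)\in B$, contradicting $m'\notin B$. Hence $x_i m'$ is itself a minimal generator of the required form.

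The hard part will be this last translation step. The crucial input is the strong stability of $\rgin(I)$: it is the Borel property that propagates a hypothetical smaller minimal generator $g=x_i\tilde g$ through multiplications by each variable $x_k\le x_i$, forcing $m'\in B$ and producing the contradiction that makes $x_i m'$ itself a minimal generator.
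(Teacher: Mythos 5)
Your overall route is the same as the paper's: reduce to $B=\rgin(I)$ via Theorem~\ref{theo:rgin}, count monomials (you count standard monomials where the paper counts those of $B_d\cap K[x_1,\dots,x_i]$ --- the two are complementary), and translate failure of equality into a witness $m'\notin B$ with $x_im'\in B$. Your fourth step, showing that a witness forces $x_im'$ itself to be a minimal generator, is correct and in fact spelled out more carefully than the corresponding passage in the paper, which proves the contrapositive directly by the same Borel-swap trick.

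The gap is in the direction you call ``immediate''. A minimal generator of degree $d$ divisible by $x_i$ cannot in general ``be taken to lie in $K[x_1,\dots,x_i]$ via a Borel flip'': a Borel flip of a minimal generator need not be a minimal generator, and the flipped monomial $g$ may satisfy $g/x_i\in B$, so no witness is produced. Concretely, take $B=\ideal{x_1^2,\,x_1x_2,\,x_2^2,\,x_1x_3^2,\,x_2x_3^2}$ in $K[x_1,x_2,x_3]$, which is strongly stable, and $(i,d)=(2,3)$. Then $x_2x_3^2$ is a minimal generator of degree $3$ divisible by $x_2$, yet every degree-$3$ monomial of $K[x_1,x_2]$ already lies in $B$, so $\M_{S/B}(2,3)=\M_{S/B}(1,3)=\M_{S/B}(2,2)=0$ and the triangle equality holds at $(2,3)$; every Borel flip of $x_2x_3^2$ into $K[x_1,x_2]$ lands on a non-minimal monomial $x_1^ax_2^{3-a}$ whose quotient by $x_2$ is still in $B$. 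So with the literal reading of ``divisible by $x_i$'' this implication is simply false and no repair of the flip argument can save it. What your counting argument actually characterizes --- and what the paper's own proof establishes, since its dimension count only sees $B_d\cap K[x_1,\dots,x_i]$ --- is the absence of minimal generators of degree $d$ divisible by $x_i$ \emph{and lying in} $K[x_1,\dots,x_i]$, i.e.\ having $x_i$ as largest variable; that restricted form is the one the later results need (via Lemma~\ref{lem:gingensequiv}), and under it your forward direction is indeed immediate once you delete the flip.
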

\begin{proof} 
Without loss of generality, we may assume $I=B$ strongly stable, because
$\M_{S/I}=\M_{S/\rgin(I)}$, by Theorem~\ref{theo:rgin},
and also because
any strongly stable ideal $B$ coincides with its
$\rgin$.

For the first part of the statement, we start observing that for any ideal $I'$ we have that
 $I'_d\cap K[x_1,\dots, x_i]$ must contain all the elements of $I'_d\cap
K[x_1,\dots, x_{i-1}]$ and all the elements of $I'_{d-1}\cap
K[x_1,\dots, x_i]$ multiplied by $x_i$, notice that the last two sets
are disjoint.
So it follows that
$$\dim_K(I'_d\cap K[x_1,\dots, x_i])$$
$$\ge\dim_K(I'_d\cap K[x_1,\dots, x_{i-1}])+\dim_K(I'_{d-1}\cap K[x_1,\dots, x_i]).$$

Then the desired inequalities follow from 
Theorem~\ref{theo:rgin} and
$$\M_{S/B}(i,d) = 
\dim_K(K[x_1,\dots, x_i])-\dim_K(B_d\cap K[x_1,\dots,x_i])$$
$$\le\M_{S/B}(i-1,d)+\M_{S/B}(i, d{-}1).$$

For the second part of the statement, suppose the equality holds:
then $\dim_K(B_d\cap K[x_1,\dots, x_i])
=\dim_K(B_d\cap K[x_1,\dots, x_{i-1}])
+\dim_K(B_{d-1}\cap K[x_1,\dots, x_i])$ 
and this implies that $B$
has no minimal generator of degree~$d$ divisible by~$x_i$.

On the other hand, suppose that $B$ has no minimal generator of
degree~$d$ divisible by $x_i$ and let $t$ be a power-product in 
$B_d\cap K[x_1,\dots,x_i]$.
If $x_i$ does not divide~$t$, then $t\in B_d\cap K[x_1,\dots,x_{i-1}]$.
Otherwhise
$t= x_i\cdot t'$.
We claim $t'\in B_{d-1}\cap K[x_1,\dots, x_i]$.
By hypothesis $t$ cannot be a minimal generator and so 
$t=x_j\cdot t''$ for some $j\in\{1,\dots, i\}$ 
and $t''\in B_{d-1}\cap K[x_1,\dots, x_i]$.
But $B$ is strongly stable, and so
$t'=x_j\cdot t''/x_i\in B_{d-1}\cap K[x_1,\dots, x_i]$,
as we claimed. 
This implies that $\dim_K(B_d\cap K[x_1,\dots, x_i])=\dim_K(B_d\cap K[x_1,\dots, x_{i-1}])+\dim_K(B_{d-1}\cap K[x_1,\dots, x_i])$ 
and hence $\M_{S/B}(i,d)= \M_{S/B}(i-1,d)+\M_{S/B}(i,d-1)$.
\end{proof}


The equality in Theorem~\ref{thm:BR_Persistence_Bor}.(1),
occurring for a homogeneous ideal,
was called in \cite{BPT2016} \textit{$i$-maximal growth in degree $d$}.
The equality in Theorem~\ref{theo:sectmatrixgenxk}
is weaker (see Example~\ref{ex:first}), 
and is crucial in this paper, so we give it a name.

\begin{Definition} Let $I$ be a non-zero homogeneous ideal in the polynomial ring $S=K[x_1, \ldots , x_l]$, $i\in\{2,\dots, l\}$ and $d\ge1$.
We say that $\M_{S/I}$ 
\textbf{\boldmath has the triangle equality in position $(i,d)$}
if and only if
$$\M_{S/I}(i,d)= \M_{S/I}(i{-}1,d)+\M_{S/I}(i,d{-}1).$$
\end{Definition}


\begin{Example}\label{ex:first}
By the description in \cite{BPT2016}, if
$\M_{S/I}(i, d) = \sum_{j=1}^i \M_{S/I}(j,d{-}1)$\\
then we have
$\M_{S/I}(i,d)= \M_{S/I}(i{-}1,d)+\M_{S/I}(i,d{-}1).$

The opposite implication is false.
Let $S=\QQ[x,y,z]$ 
and $I = \ideal{x^4 {-}y^2z^2,$ $ xy^2 {-}yz^2 {-}z^3}$ an ideal of $S$. Then the sectional matrix of $S/I$ is
$$
\begin{array}{rcccccccccc} 
            _0 & _1 & _2 & _3  & _4 
          & _5 & _6 & _7 & \dots\\
  1 & 1 & 1 & 0 & 0 & 0 & 0 & 0 & \dots\\
  1 & 2 & 3 & 3 & 2 & 1 & 0 & 0 & \dots\\
  1 & 3 & 6 & 9 & \fbox{$11$} & 12 & 12 & 12 & \dots
\end{array} 
$$
If we consider $i{=}3$ and $d{=}4$, 
then $\M_{S/I}(3,4)=\M_{S/I}(2,4)+\M_{S/I}(3,3)$, but
$\M_{S/I}(3,4)<\sum_{s=1}^3\M_{S/I}(s,3)$.
Indeed, $\rgin(I) = \ideal{x^3,  x^2y^2,  xy^4,  y^6}$ has no minimal
generator divisible by $z$,
so the triangle equality holds in the whole $3$rd row.
\end{Example}

In the case of a homogeneous ideal, putting together Theorem
\ref{theo:sectmatrixgenxk} and Lemma~\ref{lem:gingensequiv},
 we have the following corollary showing that a finite number of equalities
in the $k$-th row implies the equalities hold also for 
each and whole $s$-th row, with $s\ge k$.

\begin{Corollary}\label{corol:rowcolumnsum} 
Let $I$ be a non-zero homogenous ideal in the polynomial ring $S=K[x_1, \ldots , x_l]$ and $i\in\{2,\dots, l\}$. Then the following facts are equivalent
\begin{enumerate}
\item\label{singequal3} $\M_{S/I}$ has the triangle equality in position $(i,d)$
for all $d\le \reg(I)$.
\item\label{allequal3} $\M_{S/I}$ has the triangle equality in position $(s,d)$
for all $d\in\NN$ and $s\ge i$.
\end{enumerate}
\end{Corollary}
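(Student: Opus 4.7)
The plan is to translate the triangle equality condition, via Theorem~\ref{theo:sectmatrixgenxk}, into a condition on the minimal generators of $\rgin(I)$, and then use Lemma~\ref{lem:gingensequiv} (together with the fact that the regularity bounds the degrees of minimal generators of a strongly stable ideal, Theorem~\ref{theo:regginequalideal}) to propagate this condition from the single index $i$ to all indices $s \ge i$.

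The direction \eqref{allequal3}$\Rightarrow$\eqref{singequal3} is immediate from the definition, since taking $s=i$ and restricting to $d \le \reg(I)$ gives a subset of the positions in \eqref{allequal3}.

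For \eqref{singequal3}$\Rightarrow$\eqref{allequal3}, I would first apply the ``moreover'' part of Theorem~\ref{theo:sectmatrixgenxk}: hypothesis \eqref{singequal3} is equivalent to the statement that $\rgin(I)$ has no minimal generator of degree $d$ divisible by $x_i$, for every $d \le \reg(I)$. By Theorem~\ref{theo:regginequalideal}, the regularity of $I$ equals the highest degree of a minimal generator of the strongly stable ideal $\rgin(I)$, so in fact $\rgin(I)$ has no minimal generator divisible by $x_i$ at all. Since $\rgin(I)$ is strongly stable, Lemma~\ref{lem:gingensequiv} upgrades this to the statement that $\rgin(I)$ has no minimal generator divisible by any of $x_i, x_{i+1}, \dots, x_l$. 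Applying Theorem~\ref{theo:sectmatrixgenxk} again, in the reverse direction, for each $s \ge i$ and each $d \ge 1$ we obtain the triangle equality in position $(s,d)$; for $d > \reg(I)$ there are simply no minimal generators of that degree, so the equality still holds.

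There is really no major obstacle: the work was done when proving Theorem~\ref{theo:sectmatrixgenxk} and Lemma~\ref{lem:gingensequiv}. The only mild subtlety is making sure to invoke Theorem~\ref{theo:regginequalideal} to close the gap between ``no minimal generator of degree $\le \reg(I)$ divisible by $x_i$'' and ``no minimal generator divisible by $x_i$ at all'', so that Lemma~\ref{lem:gingensequiv} can be applied in its full strength.
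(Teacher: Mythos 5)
Your proposal is correct and follows essentially the same route as the paper's proof: both directions are handled identically, using Theorem~\ref{theo:sectmatrixgenxk} to translate the triangle equalities into the absence of minimal generators of $\rgin(I)$ divisible by $x_i$, Theorem~\ref{theo:regginequalideal} to rule out generators of degree above $\reg(I)$, and Lemma~\ref{lem:gingensequiv} to propagate to all $x_s$ with $s\ge i$. The only difference is that you spell out the role of Theorem~\ref{theo:regginequalideal} slightly more explicitly than the paper does, which is a welcome clarification rather than a divergence.
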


\begin{proof} 

Clearly (\ref{allequal3}) implies  (\ref{singequal3}). On the other
hand, by Theorem~\ref{theo:regginequalideal}
$\rgin(I)$ has no minimal generator of degree $>\reg(I)$, and
by Theorem~\ref{theo:sectmatrixgenxk}, Claim~(\ref{singequal3})
implies that $\rgin(I)$ has no minimal generator divisible by
$x_i$ for all $d\le\reg(I)$. 
Hence, by Lemma~\ref{lem:gingensequiv}, $\rgin(I)$ has no
minimal generators divisible by $x_i,\dots, x_l$, 
and we conclude by applying again Theorem~\ref{theo:sectmatrixgenxk}.
\end{proof}

The definition of $s$-reduction number has several equivalent
formulations and we recall here the one given in  
\cite{BPT2016}.

\begin{Definition}\label{def:reductnumb}
Given $I$ a homogeneous ideal in $S= K[x_1,\dots,x_l]$, we define
the \textbf{$i$-reduction number} of $S/I$ as 
$$r_i(S/I) = \max\{d \mid \M_{S/I}(l{-}i,d) \ne 0\},$$
or, equivalently, $r_i(S/I) =\min\{d\mid x_{l-i}^{d+1}\in\rgin(I)\}.$
\end{Definition}

Now we apply these results to the Cohen-Macaulay case.

\begin{Theorem}\label{theo:charcmhomideal}  
Let $I$ be a non-zero homogeneous ideal in $S=K[x_1, \ldots , x_l]$. 
Then $S/I$ is Cohen-Macaulay of codimension $i$ if and only if 
the following two conditions hold
\begin{enumerate}
\item $d_0=r_{l-i}(S/I)$ is finite,
\item \label{theo:charcmhomideal.triangle}
$\M_{S/I}$ has the triangle equality in position $(i{+}1,d)$ for all $d \le\reg(I)$.
\end{enumerate}
\end{Theorem}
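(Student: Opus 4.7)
The plan is to reduce everything to a combinatorial question about the strongly stable ideal $\rgin(I)$. By Theorem~\ref{theo:ginCM}, $S/I$ is Cohen-Macaulay of codimension $i$ if and only if $S/\rgin(I)$ is, so without loss of generality I may replace $I$ with its $\rgin$ and assume $I = B$ is strongly stable. The goal is then to show that conditions (1) and (2) together are equivalent to $B$ having all its minimal generators in $K[x_1, \ldots, x_i]$ and containing some power of $x_i$.

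First, condition (1) translates via Definition~\ref{def:reductnumb} (and Theorem~\ref{theo:rgin}) directly into the statement that some power of $x_i$ lies in $\rgin(I)$. Second, condition (2) is handled by the machinery just developed: Theorem~\ref{theo:sectmatrixgenxk} identifies triangle equality at $(i{+}1,d)$ with the absence of a minimal generator of degree $d$ divisible by $x_{i+1}$ in $\rgin(I)$, and Corollary~\ref{corol:rowcolumnsum} promotes the finite-range hypothesis ``for all $d \le \reg(I)$'' to the global conclusion that $\rgin(I)$ has no minimal generator divisible by any of $x_{i+1}, \ldots, x_l$, i.e.\ that all minimal generators live in $K[x_1, \ldots, x_i]$.

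It remains to match these two combinatorial properties with Cohen-Macaulayness of codimension $i$. For the easy direction, if $B$ is strongly stable with generators in $K[x_1, \ldots, x_i]$ and with $x_i^m \in B$, then setting $B' = B \cap K[x_1, \ldots, x_i]$ one has $S/B \cong (K[x_1, \ldots, x_i]/B')[x_{i+1}, \ldots, x_l]$; strong stability together with $x_i^m \in B'$ forces a power of every $x_j$ with $j \le i$ into $B'$, so $K[x_1, \ldots, x_i]/B'$ is Artinian and its polynomial extension is Cohen-Macaulay of codimension $i$. Conversely, if $S/B$ is Cohen-Macaulay of codimension $i$, then Theorem~\ref{theo:ginCM} applied to $B$ (which is its own $\rgin$) furnishes the regular sequence $x_l, \ldots, x_{i+1}$ on $S/B$; in particular $x_{i+1}$ is a non-zero-divisor. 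If $B$ had a minimal generator $t$ divisible by some $x_j$ with $j \ge i+1$, strong stability would yield $x_{i+1} \cdot (t/x_j) \in B$ while $t/x_j \notin B$ by minimality of $t$, contradicting the non-zero-divisor property; hence all generators lie in $K[x_1, \ldots, x_i]$, and the fact that the codimension is exactly $i$ forces the required power of $x_i$ in $B$.

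I expect the main obstacle to be bookkeeping rather than real difficulty: the sectional matrix indexing convention, the $l{-}i$ versus $i$ shift between $\M_{S/I}(\cdot, d)$ and the reduction number $r_{l-i}$, and the row index $i{+}1$ appearing in condition~(2) all must be shown to refer coherently to the same ``codimension $i$'' threshold, so that Corollary~\ref{corol:rowcolumnsum} is invoked with exactly the right row to exclude generators in the variables $x_{i+1}, \ldots, x_l$ without any gaps.
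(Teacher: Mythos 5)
Your proposal is correct and follows essentially the same route as the paper: reduce to the strongly stable ideal $\rgin(I)$ via Theorems~\ref{theo:ginCM} and~\ref{theo:rgin}, translate condition~(1) into the presence of a power of $x_i$ and condition~(2), via Theorem~\ref{theo:sectmatrixgenxk} and Corollary~\ref{corol:rowcolumnsum}, into the absence of minimal generators divisible by $x_{i+1},\dots,x_l$, and then identify these two combinatorial properties with Cohen-Macaulayness of codimension $i$. The only difference is that you spell out in detail (the Artinian polynomial-extension argument and the zero-divisor argument) the final equivalence that the paper asserts in a single sentence.
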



\begin{proof} 
By Theorem~\ref{theo:ginCM}, $I$ is Cohen-Macaulay of codimension $i$
if and only if $\rgin(I)$ is Cohen-Macaulay of codimension $i$.
Having $\M_{S/I} = \M_{S/\rgin(I)}$ by Theorem~\ref{theo:rgin}, we may
assume $I=B$, a strongly stable ideal.

Being $B$ strongly stable, $x_k^\alpha\in B$ implies
$x_j^\alpha\in B$ for all $j\le k$, so~$B$ is Cohen-Macaulay of codimension $i$
if and only if  there exists $\alpha_i\ge 1$
such that $x_i^{\alpha_i}$ is a minimal generator $B$,
in particular $\reg(B)=\alpha_i$,
and  $x_{i+1},\dots,x_l$ is an $S/B$-regular sequence,
\textit{i.e.}
 no minimal generator of $B$ is divisible by $x_s$ with $s>i$.

 In terms of sectional matrix, such $\alpha_i$ exists if and only if
 $\M_{S/B}(i,d)=0$ for all $d\ge \alpha_i$, in other words, if and
 only if $d_0$ is finite.

Moreover, the equality in (\ref{theo:charcmhomideal.triangle}) for the
$i{+}1$ row, and $d\le\reg(B)$, is
equivalent, by Corollary~\ref{corol:rowcolumnsum},  to the equality
for each $s$ row with $s\ge i+1$, and for all degrees.
And this is equivalent, 
by Theorem~\ref{theo:sectmatrixgenxk}, 
to $B$ having no minimal generators divisible by $x_s$ with $s>i$.
\end{proof}
\begin{Remark}\label{rem:newequalrowssum} By the Definition \ref{def:SM} of the sectional matrix, it follows that $\M_{S/I}$ has the triangle equality in position $(i{+}1,d)$ for all $d \le\reg(I)$ if and only if
\begin{equation*}\label{eq:newequalrowssum}\M_{S/I}(i+1,\reg(I))=\sum_{d=0}^{\reg(I)}\M_{S/I}(i,d).
\end{equation*}
\end{Remark}

The following example shows how easily we can visualize the previous theorem. 

\begin{Example}\label{exCMnonCM}
Consider the ring $S=\QQ[x,y,z,w]$ and the ideal 
$I=\ideal{xz,yw}\cap\ideal{x+z,xy}$ of $S$. 
Clearly $S/I$ is Cohen-Macaulay of codimension~$2$. 
In fact, $\reg(I)=3$, $d_0=2$, and the sectional matrix of $S/I$ is
$$
\begin{array}{rcccccccccc} 
            _0 & _1 & _2 & _3  & _4 & \dots\\
  1 & 1 & 1 & 0 & 0 &  \dots\\
  1 & 2 & 3 & 0 & 0 &  \dots\\
  1 & \fbox{$3$} & \fbox{$6$} &  \fbox{$6$} &  6 &  \dots \\
  1 & 4 & 10 & 16 & 22 & \dots
\end{array} 
$$
with the $0$ in the second row and the triangular equality in the third one.

If we consider the ideal $J_1=\ideal{x}\cap\ideal{xz,yw}$ of $S$, 
then $S/J_1$ has dimension $3$ but it is not Cohen-Macaulay. 
In fact, $\reg(I)=3$, $d_0=1$ and the sectional matrix of $S/J_1$ is 
$$
\begin{array}{rcccccccccc} 
            _0 & _1 & _2 & _3  & _4 & \dots\\
  1 & 1 & 0 & 0 & 0 &  \dots\\
  1 & \fbox{$2$} & \fbox{$2$} & \fbox{\fbox{$1$}} & 1 &  \dots\\
  1 & 3 & 5 &  6 &  7 &  \dots \\
  1 & 4 & 9 & 15 & 22 & \dots
\end{array} 
$$
If we consider the ideal $J_2=\ideal{x^2,xy^2,xyz,y^4}$ of $S$, then $S/J_2$ has dimension $2$ but it is not Cohen-Macaulay. 
In fact, $\reg(I)=4$, $d_0=3$ and the sectional matrix of $S/J_1$ is
$$
\begin{array}{rcccccccccc} 
            _0 & _1 & _2 & _3  & _4 & _5 & \dots\\
  1 & 1 & 0 & 0 & 0 & 0 &  \dots\\
  1 & 2 & 2 & 1 & 0 & 0 & \dots\\
  1 & \fbox{$3$} & \fbox{$5$} & \fbox{\fbox{$5$}} & \fbox{$5$} & 5 & \dots \\
  1 & 4 & 9 & 14 & 19 & 24 & \dots
\end{array} 
$$
and we can see that $5=\M_{S/J_2}(3,3)\ne\M_{S/J_2}(3,2)+\M_{S/I}(2,3)=5+1$.
\end{Example}

\section{Hyperplane arrangements and sectional matrices}
\label{sec:freearrsectmat}

In this section we present our second characterization of freeness for central hyperplane arrangements in $K^l$, 
where $K$ is a field of characteristic zero. 
We characterize freeness by looking at the sectional matrix of $S/J(\A)$.

\begin{Theorem}\label{theo:secondcharacterizfree}
Let $\A$ be a central arrangement and $d_0=r_{l-2}(S/J(\A))$. 
Then $\A$ is free if and only if $\M_{S/J(\A)}$ is the zero function or 
the following two conditions hold
\begin{enumerate}
\item $\M_{S/J(\A)}(3,d_0)=\M_{S/J(\A)}(3,d_0{+}1)=\M_{S/J(\A)}(3,d_0+2)$,
\item $\M_{S/J(\A)}(3,d_0)=\sum_{d=0}^{d_0}\M_{S/J(\A)}(2,d)$,
or, equivalently,
$\M_{S/J(\A)}$ has the triangle equality in position $(3,d)$, 
for all $2\le d\le d_0$.
\end{enumerate}

\end{Theorem}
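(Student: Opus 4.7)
The plan is to combine Terao's criterion (Theorem~\ref{theo:freCMcod2}) with the Cohen-Macaulay characterization of Theorem~\ref{theo:charcmhomideal}, applied to $I = J(\A)$ and codimension $i = 2$. Since $d_0 = r_{l-2}(S/J(\A))$ is automatically finite by Lemma~\ref{lemm:yalwaysgen}, Theorem~\ref{theo:charcmhomideal} reduces freeness to the following dichotomy: $\M_{S/J(\A)}$ is the zero function, or the triangle equality at $(3,d)$ holds for every $d \le \reg(J(\A))$. The point of the theorem is to show that, for a Jacobian ideal, it suffices to check the triangle equality only in the short range $2 \le d \le d_0 + 2$, which is exactly what conditions (1) and (2) encode once one observes that $\M_{S/J(\A)}(2,d) = 0$ for $d > d_0$.

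For the ``free $\Rightarrow$ (1),(2)'' direction, Theorem~\ref{theo:charcmhomideal} supplies the triangle equality at $(3,d)$ for all $d \le \reg(J(\A))$; restricted to $2 \le d \le d_0$ this is condition (2). For condition (1), the vanishing $\M_{S/J(\A)}(2,d)=0$ for $d>d_0$ collapses the triangle equality at $(3,d)$ to $\M(3,d) = \M(3,d-1)$. The equality at $d = d_0 + 1$ follows from Theorem~\ref{theo:charcmhomideal} since $x_2^{d_0+1}$ being a minimal generator of $\rgin(J(\A))$ forces $d_0+1 \le \reg(J(\A))$; the equality at $d = d_0 + 2$ follows either from Theorem~\ref{theo:charcmhomideal} again, or, when $d_0+2 > \reg(J(\A))$, from Theorem~\ref{thm:BR_Persistence_Bor}(2) applied to the strongly stable ideal $\rgin(J(\A))$, which gives the triangle equality automatically beyond the regularity.

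The reverse direction is substantive. Conditions (1) and (2) together yield the triangle equality at $(3,d)$ for every $2 \le d \le d_0+2$, so by Theorem~\ref{theo:sectmatrixgenxk} the strongly stable ideal $B := \rgin(J(\A))$ has no minimal generator of degree $\le d_0+2$ divisible by $x_3$. The main obstacle is to upgrade this to: no minimal generator of $B$ of degree $\le d_0+2$ involves any variable $x_j$ with $j \ge 3$. Lemma~\ref{lem:gingensequiv} is not directly applicable because it is a global, not degree-bounded, statement; instead I plan a direct strong-stability argument. Suppose $T$ is a counterexample of degree $e \le d_0+2$ divisible by $x_j$ for some $j \ge 4$ but not by $x_3$ (the case $x_3 \mid T$ is already ruled out by hypothesis); then $T' := x_3 T/x_j \in B$ by strong stability. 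Either $T'$ is itself a minimal generator of $B$, of degree $e$ divisible by $x_3$, contradicting the hypothesis, or $T'$ is properly divisible by some minimal generator $U$ of $B$ with $\deg U < e$. If $x_3 \mid U$, then $U$ is a minimal generator of degree $\le d_0+1$ divisible by $x_3$, again contradicting the hypothesis; if $x_3 \nmid U$, then $U$ divides $T'/x_3 = T/x_j$ and hence $T$, contradicting the minimality of $T$.

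Having established this upgrade, every minimal generator of $B$ of degree $\le d_0+2$ lies in $K[x_1,x_2]$. In particular $x_1^{n-1}$ (a minimal generator because $J(\A)$ is generated in degree $n-1$ and $B$ is strongly stable) and $x_2^{d_0+1}$ (by definition of $d_0$) are both minimal generators in $K[x_1,x_2]$; this forces $n-1 \le d_0+1$, and the classical staircase structure of strongly stable ideals in two variables then shows that every minimal generator of $B$ in $K[x_1,x_2]$ has degree between $n-1$ and $d_0+1$. Hence $B$ has no minimal generator of degree $d_0+2$, and Corollary~\ref{cor:genofalldegmax} forces $\reg(J(\A)) \le d_0+1$. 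The triangle equality at $(3,d)$ for $2 \le d \le d_0+2$ thus covers the entire range $d \le \reg(J(\A))$, and Theorem~\ref{theo:charcmhomideal} together with Terao's criterion conclude that $\A$ is free.
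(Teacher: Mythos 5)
Your proof is correct and follows essentially the same route as the paper's: Terao's criterion combined with Theorem~\ref{theo:charcmhomideal}, using the triangle equalities from (1) and (2) together with Theorem~\ref{theo:sectmatrixgenxk}, Lemma~\ref{lemm:genofalldeg} / Corollary~\ref{cor:genofalldegmax}, and the resulting bound $\reg(J(\A))\le d_0+1$ to reduce the Cohen--Macaulay check to the range $d\le d_0+2$. The one substantive difference is that where the paper invokes Lemma~\ref{lem:gingensequiv} to pass from ``no minimal generator of degree $d_0+2$ divisible by $x_3$'' to ``no minimal generator of degree $d_0+2$ at all,'' you correctly observe that this lemma is stated globally rather than degree by degree, and you supply the missing degree-bounded strong-stability argument (together with the $K[x_1,x_2]$ staircase observation that rules out generators of degree $d_0+2$ in the first two variables), which is a genuine tightening of that step rather than a different proof.
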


%
\begin{proof} 
By Theorem~\ref{theo:freCMcod2}, $\A$ is free if and only if $S/J(\A)$ is $0$ or $(l{-}2)$-dimensional Cohen-Macaulay. 
Clearly, $S/J(\A)$ is zero if and only if $\M_{S/J(\A)}$ is the zero function.

Suppose now that $S/J(\A)$ is non-zero.
Let $B = \rgin(J(\A))$ and recall that $\M_{S/B} = \M_{S/J(\A)}$,
and $\reg(B) = \reg(S/J(\A))$.
 From Lemma~\ref{lemm:yalwaysgen} we have that,
being $\A$ a central arrangement, $d_0$ is
finite and $x_2^{d_0{+}1}$ is a minimal generator of $B$.
Let $\A$ be free, then
by Theorem~\ref{theo:charcmhomideal}, 
$\M_{S/B}$ has the triangle equality in position $(3,d)$ for all
$d \le\reg(B)$, and $\reg(B) = d_0{+}1$, 
the highest degree of the minimal generators in $B$
(see Theorem~\ref{theo:firstequivfregin}).
Moreover Claim~(1) follows from Theorem~\ref{theo:sectmatrixgenxk},
the hypothesis $\M_{S/B}(2,d_0{+}1)=\M_{S/B}(2,d_0{+}2)=0$,
and the fact that $B$ has no generator divisible by $x_3$
(again by Theorem~\ref{theo:firstequivfregin}).


On the other hand suppose (1) and (2) hold.
Then by Theorem~\ref{theo:sectmatrixgenxk}
$\M_{S/B}(3,d_0+1)=\M_{S/B}(3,d_0{+}2)$ 
implies that $\rgin(J(\A))$ has no minimal generators of degree
$d_0{+}2$ divisible by $x_3$ and hence, 
by Lemma~\ref{lem:gingensequiv}, 
it follows that it has no minimal generators of degree $d_0{+}2$.
By Lemma~\ref{lemm:genofalldeg}, 
it follows that $d_0{+}1 = \reg(B)$.
So $\M_{S/B}(3,d_0) = \M_{S/B}(3,d_0{+}1) = \M_{S/B}(3,d_0{+}2)$ 
implies that 
$\M_{S/B}(3,d{-}1) = \M_{S/B}(3,d)$ 
for all $d_0{+}1\le d\le\reg(B)$.

Hence Claim~(2) implies, by Theorem~\ref{theo:charcmhomideal},
that $B$ is Cohen-Macaulay of codimension $2$,
and we conclude that $\A$ is free.
\end{proof}

Similarly to Example~\ref{ex:recurring} we can consider the following.
\begin{Example} 
Consider the arrangement $\A$ in $\C^3$ defined by the equation $Q(\A)=xyz(x+y)(x-y)$.
Then the sectional matrix of $J(\A)$ is
$$\begin{array}{rcccccccccc} 
 _0 & _1 & _2 & _3  & _4 & \fbox{$_5$} & _6 & _7 & \dots\\
  1 & 1 & 1 & 1 & 0 & 0 & 0 & 0 & \dots\\
  1 & 2 & 3 & 4 & 2 & 1 & 0 & 0 & \dots\\
  1 & 3 & 6 & 10 & 12 & \fbox{$13$} & \fbox{$13$} & \fbox{$13$} & \dots
\end{array} 
$$
In this case, $d_0=5$,
$\M_{S/J(\A)}(3,5)=\M_{S/J(\A)}(3,6)=\M_{S/J(\A)}(3,7)=13$,
and $\M_{S/J(\A)}(3,5)= \sum_{d=0}^{d_0} \M_{S/J(\A)}(2,d)$.  
Hence $\A$ is free.

Similarly consider the arrangement $\A$ in $\C^3$ defined by the equation $Q(\A)=x(x+y-z)(x+z)(x+2z)(x+y+z)$.
Then the sectional matrix of its Jacobian ideal is
$$\begin{array}{rcccccccccc} 
 _0 & _1 & _2 & _3  & \fbox{$_4$} & _5 & _6 & _7 & \dots\\
  1 & 1 & 1 & 1 & 0 & 0 & 0 & 0 & \dots\\
  1 & 2 & 3 & 4 & 2 & 0& 0 & 0 & \dots\\
  1 & 3 & 6 & 10 & \fbox{$12$} & \fbox{$12$} & \fbox{$11$} & 11 & \dots
\end{array} 
$$
In this case, $d_0=4$ and $\M_{S/J(\A)}(3,4)=\M_{S/J(\A)}(3,5)=12$, but we have $\M_{S/J(\A)}(3,6)=11$.  Hence $\A$ is not free.
\end{Example}

\section{Hyperplane arrangements and resolutions}\label{sec:freearrres}
This section is devoted to prove some additional properties of $\rgin(J(\A))$ under the assumption that $\A$ is free.
In particular, our goal is to show that if $\A$ is free, then $\rgin(J(\A))$, and hence its sectional matrix, is combinatorially determined.
Moreover, we will describe how to compute the free resolution of $\rgin(J(\A))$ just from the exponents of $\A$, 
and, viceversa, how to compute the exponents of $\A$ from the degrees of the minimal generators of $\rgin(J(\A))$.

Before proceeding recall that, as seen in the construction of the proof of Theorem~\ref{theo:firstequivfregin}, we have the following
\begin{Remark}\label{rem:shapessicmcod2} Let $B$ be a strongly stable ideal of $K[x_1,\dots,x_l]$. If $B$ is Cohen-Macaulay of codimension $2$, then $$B=\ideal{x_1^{n-1}, x_1^{n-2}x_2^{\lambda_1},\dots, x_1x_2^{\lambda_{n-2}}, x_2^{\lambda_{n-1}}},$$ for some $0<\lambda_1<\lambda_2<\cdots<\lambda_{n-1}$.
\end{Remark}

By the definitions of reduction number and sectional matrix, we have the following

\begin{Remark}\label{rem:newlambdavalue} 
Let $\A=\{H_1, \dots, H_n\}$ be a central arrangement in $K^l$. Suppose that $\A$ is free and $\rgin(J(\A))=\ideal{x_1^{n-1},x_1^{n-2}x_2^{\lambda_1},\dots,x_2^{\lambda_{n-1}}}$.
 Then $\lambda_{n-1}=r_{l-2}(S/J(\A))+1$. 
Moreover, $\lambda_{n-1}$ is equal to the minimum $d\ge n-1$ such that $\M_{S/J(\A)}(n,d{+}1)=\sum_{i=n}^1\M_{S/J(\A)}(i,d)$.
\end{Remark}

In the next two results we make use of the exact sequence in Remark \ref{remark:resjaccmfree}, hence we suppose that $\A$ is also essential.
\begin{Proposition}\label{prop:computelastexp}
Let $\A=\{H_1, \dots, H_n\}$ be an essential and central arrangement in $K^l$. Suppose that $\A$ is free with exponents $(e_1,\dots, e_l)$ and 
$\rgin(J(\A))=\ideal{x_1^{n-1},x_1^{n-2}x_2^{\lambda_1},\dots,x_2^{\lambda_{n-1}}}$. 
Then $\lambda_{n-1}=e_l+n-2$.
\end{Proposition}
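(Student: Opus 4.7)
The plan is to compute $\reg(J(\A))$ from the Hilbert–Burch style resolution of Remark~\ref{remark:resjaccmfree} and then transfer this to $\rgin(J(\A))$ via Theorem~\ref{theo:regginequalideal}, reading off $\lambda_{n-1}$ directly from the shape of the $\rgin$ recalled in Remark~\ref{rem:shapessicmcod2}.

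First I would use Remark~\ref{remark:resjaccmfree}: since $\A$ is essential, central, and free with exponents $e_1 \le e_2 \le \cdots \le e_l$, the minimal free resolution of $J(\A)$ has the form
\begin{equation*}
0 \to \bigoplus_{i=2}^{l} S(-n-e_i+1) \to S(-n+1)^{l} \to J(\A) \to 0.
\end{equation*}
From the definition of Castelnuovo--Mumford regularity, $\reg(J(\A))$ equals the maximum over $i,j$ of $a_{i,j} - i$, where $a_{i,j}$ are the shifts in the $i$-th syzygy module. The degree-$0$ term contributes $(n-1) - 0 = n-1$, while the degree-$1$ term contributes $\max_{2 \le i \le l}(n + e_i - 1) - 1 = n + e_l - 2$ since $e_l$ is the largest exponent. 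Because $\A$ is essential, $e_l \ge e_1 = 1$, so $n + e_l - 2 \ge n - 1$, and therefore
\begin{equation*}
\reg(J(\A)) = n + e_l - 2.
\end{equation*}

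Next, I would apply Theorem~\ref{theo:regginequalideal} to get $\reg(\rgin(J(\A))) = \reg(J(\A)) = n + e_l - 2$, and use the fact that for a strongly stable ideal, the regularity equals the highest degree of a minimal generator. Since $\A$ is free, by Theorem~\ref{theo:firstequivfregin} (or equivalently Remark~\ref{rem:shapessicmcod2}) the minimal generators of $\rgin(J(\A))$ are
\begin{equation*}
x_1^{n-1},\; x_1^{n-2}x_2^{\lambda_1},\; \dots,\; x_1 x_2^{\lambda_{n-2}},\; x_2^{\lambda_{n-1}},
\end{equation*}
with $1 \le \lambda_1 < \lambda_2 < \cdots < \lambda_{n-1}$. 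The generator $x_1^{n-1-i}x_2^{\lambda_i}$ has degree $n-1-i+\lambda_i$, and since $\lambda_{i+1} - \lambda_i \in \{1,2\}$ (again by Theorem~\ref{theo:firstequivfregin}) the degrees form a non-decreasing sequence as $i$ grows, so the largest generator degree is exactly that of $x_2^{\lambda_{n-1}}$, namely $\lambda_{n-1}$.

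Combining these two computations gives $\lambda_{n-1} = \reg(\rgin(J(\A))) = n + e_l - 2$, as claimed. The only non-routine step is the regularity computation from the resolution, and the main thing to be careful about is verifying that the degree-$1$ shifts $n+e_l-1$ actually dominate the degree-$0$ shift $n-1$, which is guaranteed by essentiality ($e_l \ge 1$).
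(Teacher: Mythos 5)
Your proposal is correct and follows essentially the same route as the paper: compute $\reg(J(\A))=n+e_l-2$ from the resolution in Remark~\ref{remark:resjaccmfree}, transfer it to $\rgin(J(\A))$ via Theorem~\ref{theo:regginequalideal}, and read off $\lambda_{n-1}$ as the top generator degree using Theorem~\ref{theo:firstequivfregin}. You merely spell out the regularity computation and the monotonicity of the generator degrees, which the paper leaves implicit.
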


\begin{proof} 
By the exact sequence in Remark \ref{remark:resjaccmfree}, $\reg(J(\A))=e_l+n-2$. 
By Theorem~\ref{theo:regginequalideal}, $\reg(J(\A))$ coincides with the biggest degree of a minimal generator of $\rgin(J(\A))$. 
We conclude by Theorem~\ref{theo:firstequivfregin}.
\end{proof}

In general, given an ideal $I$ and its resolution, we cannot determine the resolution of $\rgin(I)$, see the last section of \cite{BPT2016}.
However, the following theorem shows that in the case of free arrangements we can. It shows that
 $\rgin(J(\A))$ is uniquely determined by the exponents of $\A$. 
In particular, it describes how to compute the Betti numbers of $\rgin(J(\A))$ from the Betti numbers of $J(\A)$.

Before stating the theorem, we recall the following result from \cite{eliahou1990minimal}, as described in Corollary 7.2.3 of \cite{herzog2011monomial}.

\begin{Proposition}\label{prop:elihkercrit} Let $B$ be a strongly stable ideal in $K[x_1,\dots,x_l]$. Then 
$$\beta_{i,i+j}(B)=\sum_{k=1}^l\binom{k-1}{i} m_{k,j},$$
where $m_{k,j}$ is the number of minimal generators of $B$ of degree $j$ such that the biggest variable that divides them is $x_k$.
\end{Proposition}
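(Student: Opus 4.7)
The plan is to derive the Betti numbers by constructing the Eliahou--Kervaire resolution of $B$ explicitly, since its shape directly displays the claimed formula.

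First, I would recall the \emph{standard decomposition} for stable (and in particular strongly stable) ideals: for every monomial $u \in B$, there is a unique factorization $u = v \cdot w$ with $v \in G(B)$, the minimal monomial generating set, and $w$ a monomial such that every variable $x_j$ dividing $w$ satisfies $j \ge m(v)$, where $m(v)$ denotes the index of the largest variable dividing $v$. Existence is shown by peeling off the largest variable: for $u \in B$, the monomial $u/x_{m(u)}$ may fail to lie in $B$, and iterating this peeling produces a canonical generator; uniqueness follows from the same iterative choice. This decomposition is really the combinatorial heart of the argument.

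Second, I would introduce the Eliahou--Kervaire complex $F_\bullet$ in which $F_i$ has an $S$-basis indexed by pairs $(v,\sigma)$ with $v \in G(B)$ and $\sigma \subseteq \{1,\dots,m(v)-1\}$ of cardinality $i$, with the basis element $e(v,\sigma)$ sitting in internal degree $\deg(v)+i$. The differential is defined by
\[
\partial e(v,\sigma) \;=\; \sum_{j \in \sigma} (-1)^{p(\sigma,j)} \Bigl( x_j\, e(v,\sigma\setminus\{j\}) \;-\; \tfrac{x_j v}{x_{m(v')}}\, e(v',\sigma\setminus\{j\}) \Bigr),
\]
where $v' \in G(B)$ is the unique minimal generator coming from the standard decomposition of $x_j v$, and $p(\sigma,j)$ is the usual Koszul sign. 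The next step is to check that $\partial^2 = 0$ and that $F_\bullet$ is exact: exactness is most cleanly proved by constructing an explicit contracting homotopy whose definition again uses the standard decomposition (each monomial in $B$ reduces to a unique basis element of $F_0$).

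Third, I would observe that this resolution is \emph{minimal}: the differential only ever produces terms whose coefficients are variables $x_j$ or quotients $x_j v / x_{m(v')}$ of degree $\ge 1$, so no unit can appear. Consequently the ranks of the $F_i$'s in each internal degree are exactly the graded Betti numbers of $B$. Counting basis elements in $F_i$ of internal degree $i+j$ amounts to summing over $v \in G(B)$ with $\deg(v)=j$ the number $\binom{m(v)-1}{i}$ of admissible $\sigma$; grouping the generators by the index $k = m(v)$ gives exactly
\[
\beta_{i,i+j}(B) \;=\; \sum_{k=1}^{l} \binom{k-1}{i}\, m_{k,j}.
\]

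The main obstacle is the verification that $F_\bullet$ is a complex and is exact: writing the differential so that all signs and decomposition choices match up coherently is the technical step, and is the content of the original Eliahou--Kervaire paper. Since the formula is quoted in this paper only as a black box (with citations to \cite{eliahou1990minimal} and \cite{herzog2011monomial}), the cleanest treatment is to reduce the proof to pointing at that construction, and then read off the Betti numbers from its combinatorial shape as above.
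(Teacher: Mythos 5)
The paper does not actually prove this proposition: it is quoted as a known result from \cite{eliahou1990minimal}, in the form given in Corollary 7.2.3 of \cite{herzog2011monomial}. Your reconstruction of the Eliahou--Kervaire resolution is precisely the argument that citation points to, and its skeleton is right: the canonical (standard) decomposition with the peeling characterization of the canonical generator, the free modules $F_i$ with basis $e(v,\sigma)$, $v\in G(B)$, $\sigma\subseteq\{1,\dots,m(v)-1\}$, $|\sigma|=i$, in internal degree $\deg(v)+i$, minimality to identify ranks with graded Betti numbers, and the count $\binom{m(v)-1}{i}$ of admissible $\sigma$ grouped by $k=m(v)$, which is exactly $\sum_{k}\binom{k-1}{i}m_{k,j}$.

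One concrete error in the middle step: the coefficient of the second term of the differential should be $x_jv/v'$, the \emph{full} complementary factor of the canonical decomposition $x_jv=v'\cdot w$, not $x_jv/x_{m(v')}$. As written the map is not even homogeneous of internal degree zero unless $\deg(v')=1$: your coefficient has degree $\deg(v)$, whereas the required degree is $\deg(v)-\deg(v')+1$. For instance, in $B=\langle x^2,xy,y^3\rangle$ with $v=y^3$ and $j=1$, the canonical decomposition of $xy^3$ is $(xy)\cdot y^2$, so $v'=xy$ and the correct coefficient is $y^2$, not $xy^3/x_{m(xy)}=xy^2$. This slip does not damage the proposition, because the Betti-number formula uses only the basis of $F_i$ and minimality; and minimality survives the correction, since $\deg(x_jv/v')=\deg(v)-\deg(v')+1\ge 1$ because stability forces at least one peeling step, so $\deg(v')\le\deg(v)$. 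If you intend to present the argument rather than cite it, fix the differential before attempting the verification of $\partial^2=0$ and the contracting homotopy.
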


\begin{Theorem}\label{theo:resrginJ} 
Let $\A=\{H_1, \dots, H_n\}$ be an essential and central arrangement in $K^l$, with $l\ge2$. 
If $\A$ is free with exponents $(e_1,\dots,e_l)$ then $\rgin(J(\A))$ has free resolution
\begin{equation*}\label{eq:resfreeginjac}
0{\to}{\bigoplus_{j=n{-}1}^{n{+}e_l{-}2}}S(-j-1)^{\beta_{1,j+1}}{\to} {\bigoplus_{j=n{-}1}^{n{+}e_l{-}2}} S(-j)^{\beta_{0,j}}{\to} \rgin(J(\A)){\to}0,
\end{equation*}
where $\beta_{0,n-1}=\beta_{1,n}{+}1=l$ and $\beta_{1,j+1}=\beta_{0,j}=\#\{i~|~e_i{>} j{-}n{+}1\}$ for all $j\ge n$.
In particular, $\beta_{0,n-1}>\beta_{0,n}\ge\cdots\ge\beta_{0,n{+}e_l{-}2}$.
\end{Theorem}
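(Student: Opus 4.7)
The plan is to compute the Betti numbers of $B = \rgin(J(\A))$ via the Eliahou-Kervaire formula (Proposition~\ref{prop:elihkercrit}) and then pin down their numerical values by matching the Hilbert series of $J(\A)$ and $B$.

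First I would invoke Theorem~\ref{theo:firstequivfregin} (equivalently Remark~\ref{rem:shapessicmcod2}) to write $B = \ideal{x_1^{n-1},\, x_1^{n-2}x_2^{\lambda_1},\, \dots,\, x_1 x_2^{\lambda_{n-2}},\, x_2^{\lambda_{n-1}}}$. In the notation of Proposition~\ref{prop:elihkercrit} this means $m_{k,j} = 0$ for every $k \ge 3$, while $m_{1, n-1} = 1$ and $m_{1, j} = 0$ for $j \ne n-1$. Hence $\beta_{i, i+j}(B) = \sum_k \binom{k-1}{i} m_{k,j}$ vanishes for $i \ge 2$ (since $\binom{k-1}{i} = 0$ for $k \le 2 \le i$), accounting for the projective dimension $1$ in the claimed resolution. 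The two surviving indices give
\begin{equation*}
\beta_{0, j}(B) = m_{1, j} + m_{2, j}, \qquad \beta_{1, j+1}(B) = m_{2, j},
\end{equation*}
so that $\beta_{0, j}(B) - \beta_{1, j+1}(B) = m_{1, j}$, which is exactly $\beta_{0, n-1} = \beta_{1, n} + 1$ together with $\beta_{0, j} = \beta_{1, j+1}$ for $j \ge n$.

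Next I would compare Hilbert series. Since $\rgin$ preserves them, the equality of the Hilbert series of $S/J(\A)$ and $S/B$ multiplied through by $(1-t)^l$ and read off from the resolution of $J(\A)$ in Remark~\ref{remark:resjaccmfree} and the two-term resolution just established for $B$ yields
\begin{equation*}
l\, t^{n-1} - \sum_{i=2}^l t^{n + e_i - 1} = \sum_{j} \beta_{0, j}(B)\, t^j - \sum_{j} \beta_{1, j+1}(B)\, t^{j+1}.
\end{equation*}
Substituting $\beta_{0, j}(B) = \beta_{1, j+1}(B) + m_{1, j}$ into the right-hand side collapses it to $t^{n-1} + (1-t)\sum_j \beta_{1, j+1}(B)\, t^j$, so after moving $t^{n-1}$ across and factoring the left-hand side as $\sum_{i=2}^l t^{n-1}(1 - t^{e_i})$ one divides by $(1-t)$ to obtain the clean identity
\begin{equation*}
\sum_j \beta_{1, j+1}(B)\, t^j = \sum_{i=2}^l t^{n-1}\bigl(1 + t + \cdots + t^{e_i - 1}\bigr).
\end{equation*}

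Finally I would extract coefficients. The coefficient of $t^j$ on the right is $\#\{i \in \{2, \dots, l\} \mid 0 \le j - n + 1 \le e_i - 1\}$. For $j = n-1$ every $e_i \ge 1$ contributes, giving $\beta_{1, n}(B) = l-1$ and $\beta_{0, n-1}(B) = l$. For $j \ge n$ one has $j - n + 1 \ge 1$, so the condition becomes $e_i > j - n + 1$; since $e_1 = 1$ cannot satisfy it (Remark~\ref{rem:assumptexpfree}), this count equals $\#\{i \mid e_i > j - n + 1\}$, as claimed. The support $\{n-1, \dots, n + e_l - 2\}$ and the monotonicity $\beta_{0, n-1} > \beta_{0, n} \ge \cdots \ge \beta_{0, n + e_l - 2}$ are then immediate: the strict inequality uses $e_1 = 1$, the weak ones that $\#\{i \mid e_i > v\}$ is non-increasing in $v$. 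I do not anticipate a conceptual obstacle; the main care is bookkeeping around the boundary index $j = n-1$, where the generator $x_1^{n-1}$ forces $\beta_{0, n-1}$ and $\beta_{1, n}$ to differ by exactly one, and confirming that the rational expression for $\sum_j \beta_{1, j+1}(B) t^j$ simplifies to a nonnegative-integer polynomial of the predicted shape.
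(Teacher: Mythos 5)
Your proposal is correct. The first half is identical to the paper's argument: both use Proposition~\ref{prop:elihkercrit} together with the shape of $\rgin(J(\A))$ from Theorem~\ref{theo:firstequivfregin} to deduce $m_{k,j}=0$ for $k\ge 3$, $m_{1,j}=\delta_{j,n-1}$, hence projective dimension $1$ and the relations $\beta_{0,j}(B)-\beta_{1,j+1}(B)=m_{1,j}$. Where you diverge is in the second half: the paper invokes the Cancellation Principle, $\beta_{0,j}(J(\A))-\beta_{1,j}(J(\A))=\beta_{0,j}(B)-\beta_{1,j}(B)$, and then telescopes the resulting recursion to get $\beta_{0,j}(B)=\sum_{k\ge 1}\beta_{1,j+k}(J(\A))$, reading off the counts from Remark~\ref{remark:resjaccmfree}. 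You instead equate the $K$-polynomials of $J(\A)$ and $B$ (using that $\rgin$ preserves Hilbert functions), substitute the Eliahou--Kervaire relation, and divide by $1-t$ to land on $\sum_j\beta_{1,j+1}(B)\,t^j=\sum_{i=2}^l t^{n-1}(1+t+\cdots+t^{e_i-1})$, whose coefficients give the same counts. In this codimension-$2$, projective-dimension-$1$ situation the two devices carry exactly the same information, but your route is arguably more self-contained: Hilbert-function preservation under $\rgin$ is elementary, whereas the Cancellation Principle is a deeper fact about consecutive cancellations in Betti tables that the paper uses without citation. Your bookkeeping at the boundary $j=n-1$ (the extra $+1$ from $x_1^{n-1}$, and the role of $e_1=1$ in the strict inequality $\beta_{0,n-1}>\beta_{0,n}$) is handled correctly.
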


\begin{proof} 
By Hilbert-Burch Theorem and Theorem~\ref{theo:firstequivfregin}, 
we have just to describe the connections between the exponents of $\A$ and the graded Betti numbers of $B=\rgin(J(\A))$.

In our situation, we have that $m_{1,n-1}=1$, $m_{1,j}=0$ for all $j\ne n-1$ and $m_{k,j}=0$ for all $k\ge3$. Hence, by Proposition \ref{prop:elihkercrit}, we get that $\beta_{0,j}(B)=m_{2,j}=\beta_{1,j{+}1}(B)$ for all $j\ge n$ and $\beta_{0,n{-}1}(B)=m_{1,n{-}1}+m_{2,n{-}1}=1+m_{2,n{-}1}=1+\beta_{1,n}(B)$. 

Furthermore, by the Cancellation Principle, we have that $\beta_{0,j}(J(\A))-\beta_{1,j}(J(\A))=\beta_{0,j}(B)-\beta_{1,j}(B)$. If $j\ge n$, $\beta_{0,j}(J(\A))=0$ and therefore, $\beta_{1,j}(B)=\beta_{0,j}(B)+\beta_{1,j}(J(\A))$. By the first part of the proof, $\beta_{0,j}(B)+\beta_{1,j}(J(\A))=\beta_{1,j{+}1}(B)+\beta_{1,j}(J(\A))$, and hence by iterating this process we can write  $\beta_{1,j}(B)=\sum_{k=0}^{n+e_l-1-j}\beta_{1,j+k}(J(\A))$. This shows that $\beta_{0,j}(B)=\beta_{1,j{+}1}(B)=\sum_{k=1}^{n+e_l-1-j}\beta_{1,j+k}(J(\A))$. Similarly, if $j=n-1$, then $\beta_{0,n-1}(B)=\beta_{1,n}(B){+}1=\sum_{k=1}^{e_l}\beta_{1,n-1+k}(J(\A)){+}1=l$. The statement follows from Remark \ref{remark:resjaccmfree}, since $\beta_{1,j}(J(\A))=\#\{i~|~n{+}e_i{-}1{=}j\}$.
\end{proof}

\begin{Remark} From Theorem~\ref{theo:resrginJ}, given $\A$ an essential, central and free arrangement in $K^l$, 
we have that $B = \rgin(J(\A)) \subset S = K[x_1,..,x_l]$ is $S$ or has exactly $n=\#\A$ generators, with exactly $l$ generators in degree $n-1$.
Moreover, there are $n-l$ generators in higher degrees, at least one in each degree up to the maximum, giving a bound of $2n -l-1$.
Hence, we have that  $\reg(J(\A))\le 2n-l-1$.
\end{Remark}

A direct consequence of the previous theorem and Theorem \ref{theo:firstequivfregin} is the following
\begin{Corollary}\label{corol:uniquerginfromfreeexp} Let $\A$ be an essential and central arrangement in $K^l$, with $l\ge2$. 
If $\A$ is free, then $\rgin(J(\A))$ is uniquely determined by the exponents of $\A$.
\end{Corollary}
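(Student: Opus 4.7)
The plan is to chain together the two immediately preceding results. First, if $J(\A)=S$ then $\rgin(J(\A))=S$ and there is nothing to prove. So I may assume $J(\A)\subsetneq S$. By Theorem~\ref{theo:firstequivfregin} (applied to a free $\A$), I know that
$$\rgin(J(\A))=\ideal{x_1^{n-1},\,x_1^{n-2}x_2^{\lambda_1},\,\dots,\,x_2^{\lambda_{n-1}}}$$
for some strictly increasing sequence $1\le\lambda_1<\lambda_2<\cdots<\lambda_{n-1}$ with consecutive differences $\lambda_{i+1}-\lambda_i\in\{1,2\}$. Hence reconstructing $\rgin(J(\A))$ is reduced to reconstructing the sequence $(\lambda_1,\dots,\lambda_{n-1})$.

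Next, I observe that the $k$-th minimal generator (counting from $k=0$) has the rigid form $x_1^{n-1-k}x_2^{\lambda_k}$, so its $x_1$-exponent is forced by its position $k$ in the sequence, and its total degree equals $n-1-k+\lambda_k$. Consequently, once the multiset of degrees of minimal generators of $\rgin(J(\A))$ is known, the individual $\lambda_k$ can be read off: list the generators in order of increasing $x_1$-exponent decrease (equivalently, weakly increasing total degree), and for each $k$ set $\lambda_k = (\text{degree of $k$-th generator}) - (n-1-k)$. No ambiguity arises because the strictly increasing condition on the $\lambda_k$ forces the generators of any fixed degree $j$ to occupy a contiguous block in the sequence, and the consecutive-difference constraint $\lambda_{i+1}-\lambda_i\in\{1,2\}$ makes the pairing of degree with $x_1$-exponent unique.

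Finally, I invoke Theorem~\ref{theo:resrginJ}, which gives the graded Betti numbers $\beta_{0,j}(\rgin(J(\A)))$ as an explicit function of the exponents: $\beta_{0,n-1}=l$ and $\beta_{0,j}=\#\{i\mid e_i>j-n+1\}$ for $j\ge n$. In particular, the multiset of degrees of the minimal generators of $\rgin(J(\A))$ is completely determined by $(e_1,\dots,e_l)$. Combined with the reconstruction in the previous step, this determines each $\lambda_k$, and hence $\rgin(J(\A))$ itself, purely from the exponents.

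There is no real obstacle here; the only point requiring care is the unique pairing of generator-degree with $x_1$-exponent, but the structural rigidity provided by Theorem~\ref{theo:firstequivfregin} (the generator list being a lex-segment-like chain with $x_1$-exponents $n-1,n-2,\dots,0$) handles this automatically. The argument is thus essentially a bookkeeping combination of Theorems \ref{theo:firstequivfregin} and \ref{theo:resrginJ}.
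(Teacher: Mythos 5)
Your proposal is correct and takes essentially the same route as the paper, which presents this corollary as a direct consequence of Theorem~\ref{theo:resrginJ} (the Betti numbers, hence the degrees of the minimal generators, are determined by the exponents) combined with the rigid generator shape from Theorem~\ref{theo:firstequivfregin}. You have simply made the bookkeeping explicit; note only that the uniqueness of the pairing of degrees with $x_1$-exponents already follows from the strict increase of the $\lambda_i$ (which forces weakly increasing degrees), without needing the $\lambda_{i+1}-\lambda_i\in\{1,2\}$ constraint.
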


\begin{Example} 
Consider the essential arrangement $\A$ in $\C^3$ with defining equation
$Q(\A)=xyz(x-y)$. A direct computation shows that $\A$ is free with exponents $(1,1,2)$. In fact, $J(\A)$ has a free resolution
$$0\to S(-4)\oplus S(-5) \to S(-3)^3\to J(\A)\to0$$
and the exponents can be computed using Remark \ref{remark:resjaccmfree}.
By Theorem~\ref{theo:resrginJ}, we have that $\beta_{0,3}=3$, $\beta_{1,4}=\beta_{0,3}-1=2$ and $\beta_{1,5}=\beta_{0,4}=\#\{i~|~e_i{>} 1\}=\#\{e_3\}=1$.
Thus, the resolution of $\rgin(J(\A))$ is
$$0 \to S(-4)^2\oplus S(-5) \to S(-3)^3\oplus S(-4)\to \rgin(J(\A))\to0.$$
Hence, from Theorem \ref{theo:firstequivfregin} it follows  that $\rgin(J(\A))=\ideal{x^3,x^2y,xy^2}+\ideal{y^4}$.
%
%
\end{Example}

Now we show that also the converse of Corollary \ref{corol:uniquerginfromfreeexp} holds true.
\begin{Proposition}\label{prop:fromgintoexp} Let $\A$ be an essential and central arrangement in $K^l$, with $l\ge2$.
If $\A$ is free, then the exponents of $\A$ are uniquely determined by $\rgin(J(\A))$.
\end{Proposition}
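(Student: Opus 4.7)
My plan is to invert the Betti-number formula of Theorem~\ref{theo:resrginJ}. Set $B := \rgin(J(\A))$. First I extract the basic numerical data $n$ and $l$ directly from $B$: the ambient polynomial ring $S$ has $l$ indeterminates by construction, and by Theorem~\ref{theo:firstequivfregin} the smallest degree of a minimal generator of $B$ equals $n-1$ (realized by the pure power $x_1^{n-1}$), so $n = \#\A$ is recoverable from $B$ alone.

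Next, because $B$ is a monomial ideal, each graded Betti number $\beta_{0,j}(B)$ simply counts the minimal generators of $B$ of degree $j$, so the sequence $(\beta_{0,j}(B))_j$ is read off directly from $B$. By Theorem~\ref{theo:resrginJ}, for all $j \ge n$,
$$\beta_{0,j}(B) \;=\; \#\{i \mid e_i > j-n+1\}.$$
Introducing the cumulative counts $a_k := \#\{i \mid e_i > k\}$ for $k \ge 0$, this identity translates into $a_k = \beta_{0,\,k+n-1}(B)$ for every $k \ge 1$, so each $a_k$ with $k \ge 1$ is determined by $B$. Moreover, since $\A$ is essential and central, Remark~\ref{rem:assumptexpfree} ensures $e_i \ge 1$ for every $i$, hence $a_0 = l$.

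Taking successive differences then gives
$$\#\{i \mid e_i = k\} \;=\; a_{k-1} - a_k \qquad \text{for every } k \ge 1,$$
so the multiplicity of each integer in the multiset $\{e_1,\dots,e_l\}$ is determined by $B$. Combined with the ordering convention $e_1 \le e_2 \le \cdots \le e_l$ of Remark~\ref{rem:assumptexpfree}, this recovers the ordered tuple of exponents uniquely.

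The argument is essentially a routine inversion of the formula in Theorem~\ref{theo:resrginJ}, and I do not foresee a conceptual obstacle. The only subtle point worth spelling out is that $B$ on its own encodes both $n$ and $l$ (so no external information beyond the ideal is needed), and this is immediate from the shape of $\rgin(J(\A))$ provided by Theorem~\ref{theo:firstequivfregin}.
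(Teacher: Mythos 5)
Your proof is correct and follows essentially the same route as the paper's: both invert the Betti-number formula $\beta_{0,j}=\#\{i \mid e_i > j-n+1\}$ of Theorem~\ref{theo:resrginJ} by taking successive differences, which is exactly the paper's identity $\#\{i \mid e_i=\alpha\}=\beta_{0,\alpha+n-2}-\beta_{0,\alpha+n-1}$. The only cosmetic difference is that the paper first pins down $e_l$ separately via Proposition~\ref{prop:computelastexp}, whereas you treat all exponents uniformly using the anchor $a_0=l$.
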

\begin{proof}  Assume $\A=\{H_1, \dots, H_n\}$. By assumption $\A$ is essential, hence, by Remark \ref{rem:assumptexpfree}, $e_1=1$. 
Moreover, since $\A$ is free, then by Theorem \ref{theo:firstequivfregin}, we can write $\rgin(J(\A))=\ideal{x_1^{n-1},x_1^{n-2}x_2^{\lambda_1},\dots,x_2^{\lambda_{n-1}}}$, 
for some $1\le\lambda_1<\lambda_2<\cdots<\lambda_{n-1}$. By Proposition \ref{prop:computelastexp}, $e_l=\lambda_{n-1}-n+2$.

With the notation of Theorem \ref{theo:resrginJ}, $\beta_{0,j}=\#\{i~|~\lambda_i+n-i-1=j\}$. Again by Theorem \ref{theo:resrginJ},
we have that $\#\{i~|~e_i=\alpha\}=\beta_{0,\alpha+n-2}-\beta_{0,\alpha+n-1}$ for all $\alpha\ge1$.

Notice that in this way we have uniquely identified the first $\sum_{j=n-1}^{\lambda_{n-1}}\beta_{0,j}-\beta_{0,j+1}=\beta_{0,n-1}-\beta_{0,\lambda_{n-1}}<l$
of the $e_i$'s. The remaining ones are now equal to $\lambda_{n-1}-n+2$.
\end{proof}

It is known that if $\A$ is free, then its exponents are combinatorially determined, see \cite{orlterao}. By Corollary \ref{corol:uniquerginfromfreeexp}, this allows us to have the following.
\begin{Corollary}\label{cor:freelatsamegin} 
Let $\A$ and $\A'$ be two free arrangements. Suppose that $\A$ and $\A'$ are lattice equivalent, then $\rgin(J(\A))=\rgin(J(\A'))$.
\end{Corollary}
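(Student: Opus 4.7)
The proof I have in mind is essentially a two-line assembly of results already in hand. The plan is to chain together two known facts: (i) for free arrangements, the exponents depend only on the intersection lattice, and (ii) for free (essential, central) arrangements, the $\rgin$ of the Jacobian ideal is determined by the exponents.

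Step by step, I would proceed as follows. First, since $\A$ and $\A'$ are lattice equivalent, $L(\A)\cong L(\A')$. By Terao's theorem (see \cite{orlterao}), the exponents of a free central arrangement are determined by its intersection lattice, so $\A$ and $\A'$ have the same exponents $(e_1,\dots,e_l)$. Second, Corollary~\ref{corol:uniquerginfromfreeexp} tells us that for an essential central free arrangement the ideal $\rgin(J(\A))$ is determined by the exponents alone. Combining these two statements gives $\rgin(J(\A))=\rgin(J(\A'))$.

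The one technical point to address is the essentiality hypothesis in Corollary~\ref{corol:uniquerginfromfreeexp}, which is not assumed in the corollary to be proved. The cleanest way to handle this is to remark that lattice equivalence is preserved under essentialization: if $\A$ and $\A'$ have lattice equivalent non-essential realizations, their essentializations in $K^{\rk L(\A)}$ are again lattice equivalent (the lattice $L(\A)$ is unchanged), and the Jacobian ideal, being generated by $Q(\A)$ and its partials, only depends on the essential part up to multiplication of $Q(\A)$ by a product of variables that do not appear in any defining form. Moreover, freeness is preserved by essentialization (with the same non-zero exponents), so one can reduce to the essential case and directly invoke Corollary~\ref{corol:uniquerginfromfreeexp}.

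I expect the main (modest) obstacle to be precisely this essentialization reduction: one needs a clean statement that $\rgin(J(\A))$ behaves well under essentialization, i.e. that it is the same ideal (up to an obvious relabeling of variables) whether one computes it for $\A$ or its essentialization. Since the paper throughout works with central arrangements and the Jacobian ideal is intrinsic to $Q(\A)$, this should go through without difficulty, but it is the only non-tautological piece of the argument. Once this is granted, the proof is a single line citing Corollary~\ref{corol:uniquerginfromfreeexp} and \cite{orlterao}.
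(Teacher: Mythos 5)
Your proof is correct and follows exactly the paper's argument: the paper likewise combines the fact that exponents of a free arrangement are combinatorially determined (citing \cite{orlterao}) with Corollary~\ref{corol:uniquerginfromfreeexp}. Your additional care about the essentiality hypothesis is a reasonable refinement that the paper's one-line proof silently glosses over, but it does not change the approach.
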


The converse of the previous corollary is false.

\begin{Example}\emph{(cf. Example 2.61 \cite{orlterao})}
Consider the arrangements in $\C^3$, $\A=\{xyz(x-z)(x+z)(y-z)(y+z)=0\}$ and $\A'=\{xyz(x+y-z)(x+y+z)(x-y-z)(x-y+z)=0\}$. 
Then $\A$ and $\A'$ are both free arrangements with exponents $(1,3,3)$ and 
$\rgin(J(\A))=\rgin(J(\A'))=\ideal{x^6,x^5y,x^4y^2,x^3y^4,x^2y^5,xy^7,y^8}$. However, these two arrangements have non-equivalent lattices.
\end{Example}

The following example shows that Corollary~\ref{cor:freelatsamegin} is false if we do not assume that $\A$ and $\A'$ are free. 

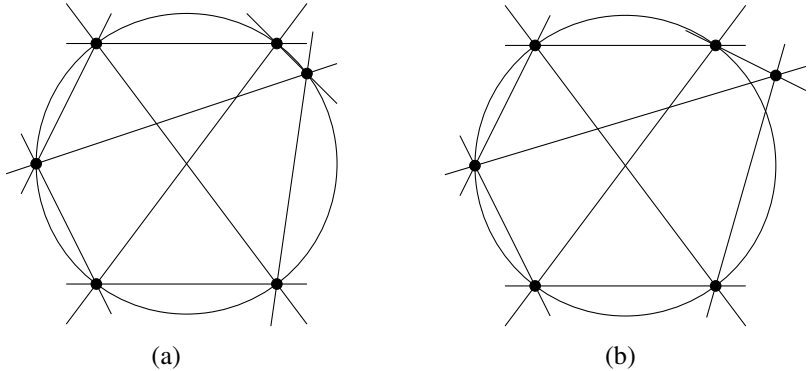
\begin{figure}[h!]
\centering
\subfigure[]
{
\begin{tikzpicture}[scale=0.4] 
\filldraw (-5,0) circle (5pt) node [] (1) {};
\filldraw (-3,-4) circle (5pt) node[](2){};
\filldraw (3,-4) circle (5pt) node[](3){};
\filldraw (4,3) circle (5pt) node[](4){};
\filldraw (3,4) circle (5pt) node[](5){};
\filldraw (-3,4) circle (5pt) node[](6){};
\draw (0,0) circle (5cm);
\draw[domain=-4:4] plot(\x,4);
\draw[domain=2:5] plot(\x,7-\x);
\draw[domain=2.8:4.2] plot(\x,7*\x-25);
\draw[domain=-4:4] plot(\x,-4);
\draw[domain=-5.5:-2.5] plot(\x,-10-2*\x);
\draw[domain=-5.5:-2.5] plot(\x,10+2*\x);
\draw[domain=-6:5] plot(\x,1/3*\x+5/3);
\draw[domain=-4:4] plot(\x,-4/3*\x);
\draw[domain=-4:4] plot(\x,4/3*\x);
\end{tikzpicture} 
}
\hspace{10mm}
\subfigure[]
{
\begin{tikzpicture}[scale=0.4]
\filldraw (-5,0) circle (5pt) node [] (1) {};
\filldraw (-3,-4) circle (5pt) node[](2){};
\filldraw (3,-4) circle (5pt) node[](3){};
\filldraw (5,3) circle (5pt) node[](4){};
\filldraw (3,4) circle (5pt) node[](5){};
\filldraw (-3,4) circle (5pt) node[](6){};
\draw (0,0) circle (5cm);
\draw[domain=-4:4] plot(\x,4);
\draw[domain=2:6] plot(\x,-1/2*\x+11/2);
\draw[domain=2.7:5.3] plot(\x,7/2*\x-29/2);
\draw[domain=-4:4] plot(\x,-4);
\draw[domain=-5.5:-2.5] plot(\x,-10-2*\x);
\draw[domain=-5.5:-2.5] plot(\x,10+2*\x);
\draw[domain=-6:6] plot(\x,3/10*\x+3/2);
\draw[domain=-4:4] plot(\x,-4/3*\x);
\draw[domain=-4:4] plot(\x,4/3*\x);
\end{tikzpicture} 
}
\caption{The arrangements of Example \ref{ex:nonfree}}\label{Fig:6pointsconic}
\end{figure}

\begin{Example}\label{ex:nonfree}
Consider the arrangements $\A=\{z(y-4z)(y+x-7z)(y-7x+25z)(y+4z)(y+2x{+}10z)(y-2x-10z)(3y-x-5z)(3y+4x)(3y-4x)=0\}$ and 
$\A'=\{z(y-4z)(2y+x-11z)(2y-7x+29z)(y+4z)(y+2x{+}10z)(y-2x-10z)(10y-3x-15z)(3y+4x)(3y-4x)=0\}$ in $\C^3$. 
We can see them as line arrangement in $\PPP^2$. See Figure \ref{Fig:6pointsconic}. Then, the first one consists of 10 lines that meet in exactly 6 triple points all 
sitting on the conic $\mathcal{C}=\{x^2+y^2-25z^2=0\}$, and the second one consists of 10 lines that meet in exactly 6 triple points 
but only 5 of them sit on the conic $\mathcal{C}$. Now, both $\A$ and $\A'$ are not free but $L(\A)\cong L(\A')$. 
A direct computation shows that $\rgin(J(\A))\ne\rgin(J(\A'))$.
\end{Example}

\begin{Remark} 
The statements of this section, and of sections \ref{sec:freearrgin} and \ref{sec:freearrsectmat} can be easily generalized to the case of reduced homogenous free divisors. For the statements on essential arrangements,  
we just need to require that the divisor $D$ is embedded in a space of minimal dimension, so that in $\Der(-\log D)$ there are no logarithmic
vector fields of degree $0$.
\end{Remark}

\section{From strongly stable ideals to free hyperplane arrangements}
Having in mind Theorem \ref{theo:resrginJ}, one could ask if given a Cohen-Macaulay strongly stable ideal $B$ of codimension 2, there  always exists a free hyperplane arrangement $\A$ such that $B=\rgin(J(\A))$. In general, the answer is no, see Example \ref{ex:nonfreearrattach} for more details. 
This section is devoted to characterize the class of strongly stable ideals for which we have a positive answer.

Clearly if $B=\ideal{1}$, then we can consider $\A=\{H\}$. Since we are looking for free hyperplane arrangements, in this section we consider only 
strongly stable ideals $B\subsetneq S=K[x_1,\dots,x_l]$ that are Cohen-Macaulay of codimension $2$.
Then $B$ has a free resolution of the type
\begin{equation*}\label{eq:resststcm2}
0\to\bigoplus_{j\ge2}S(-j)^{\beta_{1,j}}\to\bigoplus_{j\ge1}S(-j)^{\beta_{0,j}}\to B\to 0.
\end{equation*}
From now on, we will denote by $$d_{\min}=\min\{j~|~\beta_{0,j}\ne0\}\quad\text{ and }\quad d_{\max}=\max\{j~|~\beta_{0,j}\ne0\}.$$
\begin{Remark}
By Theorem \ref{theo:resrginJ}, if $\beta_{0,d_{\min}}\ne\dim(S)$, then there does not exist a free hyperplane arrangement $\A\subset K^l$ such that $B=\rgin(J(\A))$.
\end{Remark}
\begin{Example} 
Consider the strongly stable ideal $B=\ideal{x^3,x^2y^2,xy^4,y^6}$ in $S=K[x,y]$. 
Then $d_{\min}=3$ and $\beta_{0,d_{\min}}=1<2=\dim(S)$. Hence by the previous remark there does not exist 
a free hyperplane arrangement $\A\subset K^2$ such that $B=\rgin(J(\A))$.
\end{Example}
\begin{Remark}
By Theorem \ref{theo:firstequivfregin}, in $\rgin(J(\A))$ we have no ``holes". Hence 
if there exists $d_{\min}<j<d_{\max}$ such that $\beta_{0,j}=0$, then there does not exist a free hyperplane arrangement $\A\subset K^l$ such that $B=\rgin(J(\A))$.
\end{Remark}
\begin{Example}\label{ex:nonfreearrattach}
Consider the strongly stable ideal $B=\ideal{x_1^3, x_1^2x_2, x_1x_2^2, x_2^5}$ in $S=K[x_1,\dots,x_l]$, where $l\ge2$.
Then $d_{\min}=3$ and $d_{\max}=5$. However, since $B$ has no minimal generators of degree $4$, $\beta_{0,4}$ is $0$. 
Hence, by the previous remark, there does not exist 
a free hyperplane arrangement $\A\subset K^l$ such that $B=\rgin(J(\A))$, for any $l\ge2$.
\end{Example}
\begin{Remark} By Theorem \ref{theo:resrginJ}, if $\beta_{0,d_{\min}}\le\beta_{0,d_{\min}+1}$ or if $\beta_{0,j}<\beta_{0,j+1}$ for some $d_{\min}<j<d_{\max}$, then there does not exist a free hyperplane arrangement $\A\subset K^l$ such that $B=\rgin(J(\A))$.
\end{Remark}
\begin{Example} Consider the strongly stable ideal $B=\ideal{x_1^3, x_1^2x_2, x_1x_2^3, x_2^4}$ in $S=K[x_1,\dots,x_l]$, where $l\ge2$.
Then $d_{\min}=3$ and $d_{\max}=4$. Moreover, $2=\beta_{0,d_{\min}}=\beta_{0,d_{\min}+1}=\beta_{0,d_{\max}}$. Hence, 
then there does not exist a free hyperplane arrangement $\A\subset K^l$ such that $B=\rgin(J(\A))$, for any $l\ge2$.

Similarly, if we consider the ideal $B=\ideal{x_1^5, x_1^4x_2, x_1^3x_2^2, x_1^2x_2^4, x_1x_2^6,x_2^7}$ in $S=K[x_1,\dots,x_l]$, where $l\ge2$. Then we have
the same conclusion of before, since $1=\beta_{0,6}<\beta_{0,7}=2$.
\end{Example}
Before stating the main result of the section, we need the following construction.
\begin{Proposition}\label{prop:exptofreearr} Given $l-1$ integers such that $1\le e_2\le\cdots \le e_l$, then there exists an essential and central arrangement $\A$ in $K^l$ that is free with exponents $(1,e_2,\dots, e_l)$.
\end{Proposition}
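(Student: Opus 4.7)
The plan is to give an explicit construction. I will build an arrangement $\A$ in $K^l$ whose defining polynomial factors in a triangular way adapted to the prescribed exponents, so that the Euler derivation and natural derivations of polynomial degrees $e_2,\dots,e_l$ are visibly logarithmic; then I will invoke Saito's criterion to conclude freeness with the desired exponents.

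Concretely, for each $i=2,\dots,l$ fix $e_i$ pairwise distinct scalars $b_{i,1},\dots,b_{i,e_i}\in K$ (they exist since $K$ has characteristic zero) and define $\A$ to be the central arrangement with
\[
Q(\A) \;=\; x_1 \cdot \prod_{i=2}^{l}\prod_{j=1}^{e_i}(x_i-b_{i,j}x_1).
\]
This is central by construction, and essential: any point of $\bigcap_{H\in\A}H$ satisfies $x_1=0$, and then each relation $x_i-b_{i,j}x_1=0$ (available because $e_i\ge 1$ for every $i\ge 2$) forces $x_i=0$.

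For freeness, I will exhibit $l$ logarithmic derivations with the correct polynomial degrees. Take $\delta_1:=\sum_{i=1}^l x_i\partial_{x_i}$, the Euler derivation, of polynomial degree $1$. For $i=2,\dots,l$, take
\[
\delta_i \;:=\; \Big(\prod_{j=1}^{e_i}(x_i-b_{i,j}x_1)\Big)\,\partial_{x_i},
\]
of polynomial degree $e_i$. A direct check shows $\delta_i(\alpha)\in\ideal{\alpha}S$ for every defining form $\alpha$ of $\A$: indeed $\delta_i$ annihilates $x_1$ and every $x_k-b_{k,j}x_1$ with $k\ne i$, and maps $x_i-b_{i,j}x_1$ to $\prod_{m=1}^{e_i}(x_i-b_{i,m}x_1)$, which is divisible by the argument. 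Writing the coefficients of $(\delta_1,\dots,\delta_l)$ with respect to $(\partial_{x_1},\dots,\partial_{x_l})$ yields an upper-triangular matrix whose diagonal entries are exactly $x_1,\prod_j(x_2-b_{2,j}x_1),\dots,\prod_j(x_l-b_{l,j}x_1)$, so its determinant equals $Q(\A)$. By Saito's criterion, $\delta_1,\dots,\delta_l$ form an $S$-basis of $D(\A)$, and hence $\A$ is free with exponents $(1,e_2,\dots,e_l)$.

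Since the basis is engineered alongside $Q(\A)$, there is essentially no obstacle in the argument: the only technical points are the membership verifications $\delta_i\in D(\A)$ and the Saito determinant computation, both of which are immediate from the triangular product structure of $Q(\A)$.
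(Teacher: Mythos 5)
Your proof is correct. The arrangement you build is essentially the same one the paper uses (the paper takes $\{x_1=0\}$ together with $\{x_1-\alpha x_i=0\}$ for $\alpha\in\{1,\dots,e_i\}$, which is the same combinatorial family as your $\{x_i-b_{i,j}x_1=0\}$), but you verify freeness differently: the paper observes that the arrangement is supersolvable and cites Theorem 4.58 of \cite{orlterao} to conclude freeness with exponents $(1,e_2,\dots,e_l)$, whereas you exhibit the explicit derivations $\delta_1=\delta_E$ and $\delta_i=\bigl(\prod_j(x_i-b_{i,j}x_1)\bigr)\partial_{x_i}$ and apply Saito's criterion. Your route is more self-contained and elementary --- the logarithmic membership checks and the upper-triangular determinant computation are immediate, and no knowledge of supersolvability is needed --- at the cost of being slightly longer; the paper's route is a one-line citation once supersolvability is noted. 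Both arguments are complete (your choice of pairwise distinct $b_{i,j}$ for each fixed $i$ guarantees the hyperplanes are distinct, and $e_i\ge1$ for all $i\ge2$ gives essentiality), so either is acceptable.
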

\begin{proof} Consider the arrangement $\A$ in $K^l$ consisting of the following hyperplanes
$$\{x_1=0\},$$
$$\{\{x_1-\alpha_2x_2=0\}~|~\alpha_2\in\{1,\dots, e_2\}\},$$
$$\vdots$$
$$\{\{x_1-\alpha_lx_l=0\}~|~\alpha_l\in\{1,\dots, e_l\}\}.$$
By construction, the arrangement is essential, central and supersolvable (see \cite{orlterao} for the definition). By Theorem 4.58 in \cite{orlterao}, $\A$ is free with exponents $(1,e_2,\dots, e_l)$.
\end{proof}
\begin{Theorem}\label{theo:fromststcmtofreearr} Let $B$ be a Cohen-Macaulay strongly stable ideal in $K[x_1,\dots,x_l]$ of codimension $2$. Assume that the following conditions hold
\begin{enumerate} 
\item $\beta_{0,d_{\min}}=l$;
\item $\beta_{0,d_{\min}}>\beta_{0,d_{\min}+1}\ge\cdots\ge\beta_{0,d_{\max}}$.
\end{enumerate}
Then there exists a free hyperplane arrangement $\A\subset K^l$ such that $B=\rgin(J(\A))$. In particular, $\A$ has $d_{\min}+1$ hyperplanes.
\end{Theorem}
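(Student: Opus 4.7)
The plan is to reverse-engineer the exponents of a free arrangement from the Betti numbers of $B$ (reading the formulas in Theorem~\ref{theo:resrginJ} backwards, as done in Proposition~\ref{prop:fromgintoexp}), build a free arrangement realizing those exponents via Proposition~\ref{prop:exptofreearr}, and then use uniqueness of $\rgin$ in terms of exponents (Corollary~\ref{corol:uniquerginfromfreeexp}) together with the structural classification of codimension~$2$ strongly stable ideals (Remark~\ref{rem:shapessicmcod2}) to conclude that the resulting arrangement has $B$ as its $\rgin$.

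Concretely, set $n := d_{\min}+1$ and $e_1 := 1$. Motivated by the formulas of Theorem~\ref{theo:resrginJ}, define multiplicities
\[
a_1 := l - \beta_{0,n}(B), \qquad a_\alpha := \beta_{0,n+\alpha-2}(B) - \beta_{0,n+\alpha-1}(B) \text{ for } \alpha \ge 2,
\]
(with the convention $\beta_{0,d_{\max}+1}(B)=0$) and take the multiset of exponents to contain $a_\alpha$ copies of the value $\alpha$. Condition~(1) gives $\beta_{0,d_{\min}}(B)=l$ and condition~(2) gives $l=\beta_{0,n-1}(B) > \beta_{0,n}(B) \ge \cdots \ge \beta_{0,d_{\max}}(B) \ge 0$, so each $a_\alpha$ is a non-negative integer and $a_1 \ge 1$. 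Summing the telescoping differences yields $\sum_\alpha a_\alpha = \beta_{0,n-1}(B) = l$, hence we obtain a valid sequence $1=e_1 \le e_2 \le \cdots \le e_l$.

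Next, I would verify the total degree matches: a direct telescoping computation gives
\[
\sum_{i=1}^l e_i \;=\; \sum_{\alpha\ge 1}\alpha\,a_\alpha \;=\; l \;+\; \sum_{k=d_{\min}+1}^{d_{\max}} \beta_{0,k}(B)
\]
which, by Remark~\ref{rem:shapessicmcod2} applied to $B$ (the number of minimal generators of $B$ equals $n$, and that number is $\sum_k \beta_{0,k}(B) = l + \sum_{k>d_{\min}} \beta_{0,k}(B)$), equals $d_{\min}+1 = n$. Applying Proposition~\ref{prop:exptofreearr} then produces an essential, central, free arrangement $\A \subset K^l$ with exponents $(1,e_2,\dots,e_l)$ and cardinality $\sum_i e_i = n = d_{\min}+1$, as claimed in the "in particular" clause.

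Finally, to show $\rgin(J(\A))=B$, apply Theorem~\ref{theo:resrginJ} to $\A$: its $\rgin(J(\A))$ is Cohen-Macaulay of codimension~$2$ with $\beta_{0,n-1} = l$ and $\beta_{0,j} = \#\{i \mid e_i > j-n+1\}$ for $j \ge n$. By the definition of the $a_\alpha$'s, these Betti numbers agree degree-by-degree with those of $B$. Since both $B$ and $\rgin(J(\A))$ are codimension~$2$ Cohen-Macaulay strongly stable ideals in $K[x_1,\dots,x_l]$, Remark~\ref{rem:shapessicmcod2} forces each of them to have the lex-segment form $\langle x_1^{n-1}, x_1^{n-2}x_2^{\lambda_1},\dots, x_2^{\lambda_{n-1}}\rangle$, and the exponents $\lambda_i$ are entirely determined by the degree sequence of the minimal generators, i.e.\ by the $\beta_{0,j}$'s. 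Thus $B = \rgin(J(\A))$.

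The main obstacle, I expect, is the bookkeeping in the third paragraph: one must carefully match the telescoping of the $a_\alpha$-sum with the count of minimal generators of $B$ (using Remark~\ref{rem:shapessicmcod2}) to see that the produced arrangement indeed has exactly $d_{\min}+1$ hyperplanes. Once this numerical check goes through, the rest is a direct comparison of Betti tables of two strongly stable ideals of the same constrained shape.
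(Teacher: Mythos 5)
Your proposal is correct and follows essentially the same route as the paper: define the exponent multiplicities by telescoping the $\beta_{0,j}$'s, verify $\sum_i e_i = d_{\min}+1$ via the generator count from Remark~\ref{rem:shapessicmcod2}, realize the exponents with Proposition~\ref{prop:exptofreearr}, and match Betti tables through Theorem~\ref{theo:resrginJ}. Your closing step is in fact slightly more explicit than the paper's (which just cites Corollary~\ref{corol:uniquerginfromfreeexp}), since you spell out why equal degree sequences force equality of two codimension-$2$ Cohen--Macaulay strongly stable ideals via the lex-segment shape of Remark~\ref{rem:shapessicmcod2}.
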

\begin{proof} 
Notice that, from the hypothesis, $\beta_{0,d_{\min}}-\beta_{0,d_{\min}+1}\ge1$. Define $e_i=1$, for all $i=1,\dots, \beta_{0,d_{\min}}-\beta_{0,d_{\min}+1}$. 
For all $j=d_{\min}+2,\dots, d_{\max}$, define $e_i=j-d_{\min}$ for all $i=\beta_{0,d_{\min}}-\beta_{0,j-1}+1,\dots, \beta_{0,d_{\min}}-\beta_{0,j}$.
Notice that by construction, the number of $e_i$ equal to $j-d_{\min}$ is $\beta_{0,j-1}-\beta_{0,j}$. 

In this way we have defined the first $\sum_{j=d_{\min}+1}^{d_{\max}}\beta_{0,j-1}-\beta_{0,j}$ of the $e_i$. By construction 
$$\sum_{j=d_{\min}+1}^{d_{\max}}\beta_{0,j-1}-\beta_{0,j}=\beta_{0,d_{\min}}-\beta_{0,d_{\max}}<l.$$
Define now the remaining $e_i$ equal to $d_{\max}-d_{\min}+1$. Notice that by construction, the number of $e_i$ equal to $d_{\max}-d_{\min}+1$ is $\beta_{0,d_{\max}}$. Notice now that by Remark \ref{rem:shapessicmcod2}, we have
\begin{eqnarray}\label{eq:sumexpisright}
\sum_{i=1}^le_i&=&\sum_{j=d_{\min}+1}^{d_{\max}}(j-d_{\min})(\beta_{0,j-1}-\beta_{0,j})+(d_{\max}-d_{\min}+1)\beta_{0,d_{\max}}= \nonumber \\
&=&\sum_{j=d_{\min}}^{d_{\max}}\beta_{0,j}=\#\{\text{~minimal generators of~} B\}=d_{\min}+1.
\end{eqnarray}
In this way we have constructed $l$ integers that satisfy the hypothesis of Proposition \ref{prop:exptofreearr}, 
and hence there exists an essential arrangement $\A$ in $K^l$ that is free with exponents $(e_1=1,e_2,\dots, e_l)$. 
Now, by construction, Theorem \ref{theo:resrginJ} and equality \eqref{eq:sumexpisright}, $B$ and  $\rgin(J(\A))$ have the same resolution. 
By Corollary \ref{corol:uniquerginfromfreeexp}, we have that $B=\rgin(J(\A))$.
\end{proof}
\begin{Example}
Consider the ideal $B=\ideal{x^6, x^5y, x^4y^2, x^3y^4, x^2y^5,xy^7,y^9}$ in $S=K[x,y,z]$.
Then $d_{\min}=6$ and $d_{\max}=9$, and $\beta_{0,6}=3$, $\beta_{0,7}=2$ and $\beta_{0,8}=\beta_{0,9}=1$. Using the construction of Theorem \ref{theo:fromststcmtofreearr},
we obtain $(e_1,e_2,e_3)=(1,2,4)$. Consider now the arrangement $\A$ in $K^3$ defined by $Q=x(x-y)(x-2y)(x-z)(x-2z)(x-3z)(x-4z)$, then $B=\rgin(J(\A))$.
\end{Example}

Putting together Theorems \ref{theo:fromststcmtofreearr} and \ref{theo:resrginJ}, we obtain the following characterization for the $\rgin$ associated to essential, central and free
hyperplane arrangements.
\begin{Corollary} Let $B$ a strongly stable ideal in $K[x_1,\dots,x_l]$. There exists an essential, central and free arrangement $\A$ of $n$ hyperplanes such that $B=\rgin(J(\A))$ if and only if $B$ is minimally generated by $$x_1^{n-1}, x_1^{n-2}x_2^{\lambda_1},\dots, x_1x_2^{\lambda_{n-2}}, x_2^{\lambda_{n-1}}$$ with $\#\{i~|~\lambda_i=i\}\ge\#\{i~|~\lambda_i=i+1\}\ge\cdots\ge\#\{i~|~\lambda_i=i+\lambda_{n-1}-n+1\}$.
\end{Corollary}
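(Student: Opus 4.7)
The plan is to deduce this corollary by directly combining Theorems~\ref{theo:firstequivfregin}, \ref{theo:resrginJ} and \ref{theo:fromststcmtofreearr}; the substantive work is already done there, and what remains is a bookkeeping translation between the combinatorial data $\{\lambda_i\}$ of $B$ and its graded Betti numbers. The key observation is that, for $B$ of the displayed shape, the generator $x_1^{n-i-1}x_2^{\lambda_i}$ has total degree $n-1+(\lambda_i-i)$, so
\[
\beta_{0,n-1}(B) = 1 + \#\{i\mid \lambda_i = i\},\qquad \beta_{0,n-1+j}(B) = \#\{i\mid \lambda_i = i+j\}\ \text{for}\ j\ge 1,
\]
where the $+1$ in degree $n-1$ accounts for the extra generator $x_1^{n-1}$. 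Under this dictionary, the chain displayed in the corollary is equivalent to the Betti-number chain $\beta_{0,d_{\min}} > \beta_{0,d_{\min}+1} \ge \cdots \ge \beta_{0,d_{\max}}$ appearing in Theorem~\ref{theo:fromststcmtofreearr}, with the strict first inequality arising automatically from the $+1$.

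For the $(\Rightarrow)$ direction, I would start from Theorem~\ref{theo:firstequivfregin} to obtain the claimed monomial form of $B = \rgin(J(\A))$, and then apply Theorem~\ref{theo:resrginJ} to read off the Betti-number inequalities $\beta_{0,n-1} > \beta_{0,n} \ge \cdots \ge \beta_{0,n+e_l-2}$, with $d_{\max} = n + e_l - 2$ by Proposition~\ref{prop:computelastexp}. Translating back through the dictionary produces precisely the asserted monotonicity on the counts $\#\{i\mid \lambda_i = i+j\}$.

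For the converse, I would begin by observing, via Remark~\ref{rem:shapessicmcod2}, that the displayed form of $B$ makes it Cohen--Macaulay and strongly stable of codimension $2$ in $K[x_1,\dots,x_l]$. Through the dictionary, the hypothesis on the $\lambda_i$'s yields condition~(2) of Theorem~\ref{theo:fromststcmtofreearr}, while condition~(1) ($\beta_{0,d_{\min}} = l$) corresponds to $\#\{i\mid \lambda_i = i\} = l-1$ and is built into the compatibility of the given generating set with the ambient polynomial ring. Applying Theorem~\ref{theo:fromststcmtofreearr} then produces a free arrangement $\A \subseteq K^l$ with $B = \rgin(J(\A))$ and $\#\A = d_{\min} + 1 = n$, as required.

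The main obstacle is not substantive: all the real content sits inside the three theorems being combined. The delicate point is tracking the off-by-one contribution of $x_1^{n-1}$ to $\beta_{0,n-1}$ so that the $\lambda_i$-monotonicity converts faithfully into the strict-then-weak Betti-number chain, and ensuring that the compatibility $\#\{i\mid \lambda_i = i\} = l-1$ is read out of the standing assumption that the listed monomials are the minimal generators of $B$ in $K[x_1,\dots,x_l]$. Once this bookkeeping is in place, the three cited theorems chain together to give both directions.
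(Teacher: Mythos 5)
Your overall route is exactly the paper's: the paper offers no written argument beyond ``putting together Theorems~\ref{theo:fromststcmtofreearr} and~\ref{theo:resrginJ}'', and your dictionary $\beta_{0,n-1}(B)=1+\#\{i\mid\lambda_i=i\}$ and $\beta_{0,n-1+j}(B)=\#\{i\mid\lambda_i=i+j\}$ for $j\ge1$ is the correct bookkeeping that converts the strict-then-weak Betti chain $\beta_{0,d_{\min}}>\beta_{0,d_{\min}+1}\ge\cdots\ge\beta_{0,d_{\max}}$ into the all-weak chain on the counts $\#\{i\mid\lambda_i=i+j\}$. The forward direction, via Theorem~\ref{theo:firstequivfregin} for the shape and Theorem~\ref{theo:resrginJ} (with Proposition~\ref{prop:computelastexp} for $d_{\max}$) for the monotonicity, is fine.

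The one genuine gap is in your converse, where you assert that condition~(1) of Theorem~\ref{theo:fromststcmtofreearr}, namely $\beta_{0,d_{\min}}=l$, equivalently $\#\{i\mid\lambda_i=i\}=l-1$, ``is built into the compatibility of the given generating set with the ambient polynomial ring.'' It is not: the displayed generators involve only $x_1,x_2$ and place no constraint on $l$. For instance $B=\ideal{x_1^2,x_1x_2,x_2^2}$ viewed inside $K[x_1,\dots,x_5]$ has the displayed shape and satisfies the chain of inequalities, yet no essential arrangement of $n=3$ hyperplanes exists in $K^5$, so Theorem~\ref{theo:fromststcmtofreearr} cannot be applied and no such $\A$ exists. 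The equality $\#\{i\mid\lambda_i=i\}=l-1$ is an independent numerical hypothesis that must be verified or explicitly imposed; it is exactly what essentiality forces via $\beta_{0,n-1}(\rgin(J(\A)))=l$ in Theorem~\ref{theo:resrginJ}, not something that follows from minimal generation. To be fair, the corollary as printed carries the same omission, so your proposal faithfully reproduces the intended two-theorem argument; but as written, the step ``apply Theorem~\ref{theo:fromststcmtofreearr}'' cannot be executed without this extra check.
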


\paragraph{\textbf{Acknowledgements}} 
During the preparation of this paper the third author was supported by the MEXT grant for Tenure Tracking system. The authors thanks D. Suyama and S. Tsujie for suggesting the construction in Proposition \ref{prop:exptofreearr}.

All the computations in the paper are done using CoCoA, see \cite{CoCoALib}, \cite{AbbottBigatti2016} and \cite{COCOA}.

\bibliography{bibliothesis}{}
\bibliographystyle{plain}

\end{document}